\documentclass[journal,twoside,web]{ieeecolor}
\definecolor{subsectioncolor}{RGB}{0,0,0}
\usepackage{generic}
\usepackage{multirow}
\usepackage{booktabs}
\usepackage{mathrsfs}
\usepackage{cite}
\usepackage{amsmath,amssymb,amsfonts}
\usepackage[mathscr]{euscript}
\usepackage{graphicx}
\usepackage[hidelinks]{hyperref}
\usepackage[capitalize,noabbrev]{cleveref}
\usepackage{textcomp}
\def\BibTeX{{\rm B\kern-.05em{\sc i\kern-.025em b}\kern-.08em
    T\kern-.1667em\lower.7ex\hbox{E}\kern-.125emX}}
\usepackage{url}
\usepackage{pifont}
\usepackage{cleveref}
\usepackage{algorithm}
\usepackage{algpseudocode}

\crefname{equation}{Eq.}{Eqs.}
\Crefname{equation}{Equation}{Equations}
\crefname{theorem}{Theorem}{Theorems}
\Crefname{lemma}{Lemma}{Lemmas}

\newtheorem{proposition}{Proposition}

\newtheorem{definition}{Definition}

\newcommand{\rank}{\text{rank}}

\newcommand{\Bv}{\mathbf{B}}

\newcommand{\Nv}{\mathbf{N}}

\newcommand{\Rv}{\mathbf{R}}

\newcommand{\At}{\mathscr{A}}

\newcommand{\As}{\mathcal{A}}

\newcommand{\Ds}{\mathcal{D}}
\newcommand{\Es}{\mathcal{E}}

\newcommand{\Gs}{\mathcal{G}}
\newcommand{\Hs}{\mathcal{H}}

\newcommand{\Os}{\mathcal{O}}

\newcommand{\Qs}{\mathcal{Q}}

\newcommand{\Ss}{\mathcal{S}}
\newcommand{\Ts}{\mathcal{T}}

\newcommand{\Vs}{\mathcal{V}}
\newcommand{\Ws}{\mathcal{W}}

\begin{document}
\title{\fontsize{22}{20}\selectfont Structural Controllability of Large-Scale Hypergraphs}
\author{Joshua Pickard, Xin Mao, and Can Chen
\thanks{Joshua Pickard is supported by the Eric and Wendy Schmidt Center at the Broad Institute of MIT and Harvard  (Corresponding authors: Joshua
Pickard; Can Chen).}
\thanks{Joshua Pickard is with the Eric and Wendy Schmidt Center, Broad Institute of MIT and Harvard, Cambridge, MA 02142, USA (e-mail: jpickard@broadinstitute.org).}
\thanks{Xin Mao is with the School of Data Science and Society, University of North Carolina at Chapel Hill, Chapel Hill, NC 27599, USA (e-mail: xinm@unc.edu).}
\thanks{Can Chen is with the School of Data Science and Society, the Department of Mathematics, and the Department of Biostatistics, University of North Carolina at Chapel Hill, Chapel Hill, NC 27599, USA (e-mail: canc@unc.edu).}}

\maketitle
\thispagestyle{empty}
\pagestyle{empty}

\begin{abstract}
    Controlling real-world networked systems, including ecological, biomedical, and engineered networks that exhibit higher-order interactions, remains challenging due to inherent nonlinearities and large system scales. Despite extensive studies on graph controllability, the controllability properties of hypergraphs remain largely underdeveloped. Existing results  focus primarily on exact controllability, which is often impractical for large-scale hypergraphs. In this article, we develop a structural controllability framework for hypergraphs by modeling hypergraph dynamics as polynomial dynamical systems. In particular, we extend classical notions of accessibility and dilation from linear graph-based systems to polynomial hypergraph dynamics and establish a hypergraph-based criterion under which the topology guarantees satisfaction of classical Lie-algebraic and Kalman-type rank conditions for almost all parameter choices. We further derive a topology-based lower bound on the minimum number of driver nodes required for structural controllability and leverage this bound to design a scalable driver node selection algorithm combining dilation-aware initialization via maximum matching with greedy accessibility expansion. We demonstrate the effectiveness and scalability of the proposed framework through numerical experiments on hypergraphs with tens to thousands of nodes and higher-order interactions.
\end{abstract}

\begin{keywords}
    Structural controllability, hypergraphs, polynomial  systems, large-scale systems, tensor algebra, driver node selection, maximum matching.
\end{keywords}

\section{Introduction}
Control of large-scale networked systems arises in a wide range of application domains, including biomedical, social, power, and ecological systems \cite{li2019control, kandhway2016using,ouammi2014optimal,liu2024control}.
In these networks, interventions applied to a small subset of nodes can influence the global system behavior, making network control a critical problem.
Initiated by C. T. Lin in 1974, substantial progress has been made in graph controllability, giving rise to structural controllability and driver node selection methods that enable scalable analysis and control design for networked systems \cite{lin1974structural, shields1976structural, mayeda1979strong, liu2011controllability, mousavi2017structural, menara2018structural,d2023controlling}.
However, many real-world networked systems exhibit higher-order interactions, in which the evolution of each node depends simultaneously on multiple nodes \cite{benson2016higher, zhu2018social, higham2021epidemics, dotson2022deciphering, bick2023higher, alge2024continuous,delabays2025hypergraph}.
For instance, in ecological networks, species often engage in higher-order interactions, where the relationship between two species is influenced by the presence of one or more additional species \cite{golubski2016ecological}.
In such settings, graph-based representations and traditional graph controllability methods are insufficient to capture and control the underlying interaction structure.

Hypergraphs generalize graphs by allowing hyperedges to connect more than two nodes, providing natural representations of group interactions \cite{berge1984hypergraphs}.
The adjacency structure of a hypergraph admits a representation as a higher-order tensor (i.e., a multidimensional array).
This representation enables the modeling of dynamics on hypergraphs via tensor-based dynamical systems, which are equivalent to homogeneous polynomial  systems.
Recent work has extended classical control theory notions of controllability, observability, and other systems properties and methods to hypergraph-induced polynomial dynamics \cite{chen2021controllability,pickard2023observability, pickard2023kronecker, pickard2024geometric, pickard2025scalable,dong2024controllability,chen2022explicit,10910206,11124400,cui2024discrete,zhang2024global,mao2026model,11240158}.
In particular, Chen et al. developed a tensor-based extension of Kalman’s rank condition to determine the exact controllability of hypergraphs \cite{chen2021controllability}.
This condition verifies the classical Lie algebraic conditions for nonlinear controllability in the hypergraph setting \cite{hermann1977nonlinear, jurdjevic1983polynomial, jurdjevic1985polynomial}, and was combined with a greedy heuristic to identify minimal driver node sets and to elucidate how higher-order interactions affect controllability.
However, verifying the generalized Kalman-type condition is computationally intensive, with time complexity that grows rapidly with both  system dimension and  interaction order.
Moreover, exact controllability analysis depends on absolute knowledge of all system parameters \cite{lee1967foundations}.
These constraints  hinder the applicability of existing hypergraph controllability methods to large-scale real-world systems, where interaction strengths are often uncertain, heterogeneous, or inferred from data.

Structural controllability offers an alternative perspective to exact controllability, addressing issues of parameter uncertainty and algorithmic scalability, by considering only the interaction topology of the system independent of the parameter values.
This perspective is appealing for large-scale networked systems, where interaction strengths are often uncertain \cite{liu2011controllability, chapman2013strong, liu2016control, lee2020heritability}.
The concept characterizes linear controllability based on the zero-nonzero pattern of the system matrices, bypassing the need for algebraic tests such as the Kalman's rank condition or the Popov--Belevitch--Hautus (PBH) test \cite{gilbert1963controllability, kalman1963mathematical, kalman1968lectures, popov1973hyperstability}.
By encoding the system structure as a directed graph, this relaxation enables the use of graph-theoretic tools to analyze controllability and to design actuator placement strategies efficiently.
Liu et al. demonstrated that these methods scale to networks with hundreds of thousands of nodes and can guide the selection of a minimal set of driver nodes through maximum matching  to achieve controllability \cite{liu2011controllability}.
As a result, structural controllability has become a cornerstone of graph control theory.
Despite these successes, an analogous structural theory for hypergraph dynamics remains largely undeveloped.
Developing such a theory is challenging due to the  nonlinear nature of higher-order interactions and combinatorial complexity of hypergraph structures.

In this article, we develop a novel structural controllability framework for large-scale hypergraphs.
We first highlight the limitations imposed by system identification and numerical representation of applying the Kalman-type condition for hypergraph controllability.
Building on Chen’s work on  hypergraph controllability \cite{chen2021controllability}, we model hypergraph dynamics using tensor-based homogeneous polynomial  systems and demonstrate that structural controllability can be characterized through accessibility and dilation in hypergraphs, which generalize the corresponding concepts from linear systems and graph theory.
Unlike exact controllability approaches, this framework does not rely on precise knowledge of system parameters, making it robust to uncertainty and heterogeneity in interaction strengths.
We further derive a topology-based lower bound on the minimum number of driver nodes required to achieve structural controllability and propose scalable driver node selection algorithms that combine dilation-aware initialization via maximum matching with greedy accessibility expansion.
Numerical experiments demonstrate the effectiveness and scalability of our framework  across a wide range of  hypergraphs with tens of thousands of nodes and interactions.

Beyond its theoretical contributions, our framework has practical implications for domains such as ecological network management, biological regulation, and engineered infrastructure systems, where higher-order interactions are prevalent, control resources are limited, and system parameters are often uncertain.
In ecological networks, for instance, higher-order interactions among species, such as facilitation, competition, or mutualistic effects mediated by shared resources, are difficult to quantify accurately, making parameter-dependent controllability analysis infeasible \cite{bairey2016high,mayfield2017higher}.
In this context, structural driver node selection provides a principled approach to identify a minimal set of species or intervention points whose regulation can influence global system dynamics without requiring precise knowledge of interaction strengths.
More broadly, the proposed framework enables scalable controllability analysis and actuator placement for high-dimensional nonlinear polynomial dynamical systems, where exact controllability methods based on algebraic or differential-geometric criteria are computationally prohibitive.

The remainder of the article is organized as follows.
Section~\ref{sec:2} introduces preliminaries on tensors and hypergraphs, controllability of polynomial systems and hypergraphs, and structural controllability of linear graph-based systems.
Section~\ref{sec:3} presents the main theoretical results on hypergraph structural controllability, including system identification limitations, key hypergraph structures, the controllability framework, and scalable verification algorithms.
Section~\ref{sec:numerical_experiments} demonstrates the performance of the proposed driver node selection method through numerical experiments.
Section~\ref{sec:5.5} discusses extensions of the framework to broader classes of polynomial systems and control objectives.
Finally, Section~\ref{sec:5} provides concluding remarks with future directions.

\section{Preliminaries}\label{sec:2}

\subsection{Tensors and Hypergraphs}
A tensor is a higher-order extension of vectors and matrices \cite{kolda2009tensor,chen2024tensor}.
We consider a $k$th-order, $n$-dimensional tensor
$\mathscr{T}\in\mathbb{R}^{n\times n\times\stackrel{k}{\cdots}\times n}$ whose entries are indexed by  $\mathscr{T}_{j_1j_2\cdots j_k}$. The mode-$p$ tensor-vector multiplication of $\mathscr{T}$ with a vector $\textbf{v}\in\mathbb{R}^n$ is defined as
\begin{equation*}
(\mathscr{T} \times_{p} \textbf{v})_{j_1j_2\cdots j_{p-1}j_{p+1}\cdots j_k}=\sum_{j_p=1}^{n}\mathscr{T}_{j_1j_2\cdots j_p\cdots j_k}\textbf{v}_{j_p},
\end{equation*}
yielding a tensor of order $k-1$. 
Applying successive tensor-vector multiplication along the last $k-1$ modes results in
\begin{equation*}\label{eq: tensor homogeneous polynomial}
\begin{split}
\mathscr{T}\times_2 \textbf{v} \times_3\textbf{v} \times_4\cdots \times_{k}\textbf{v}\in\mathbb{R}^n,
\end{split}
\end{equation*}
which defines a homogeneous polynomial system of degree $k-1$. For notational simplicity, we denote it by $\mathscr{T}\textbf{v}^{k-1}$.
Moreover, the homogeneous polynomial can be equivalently expressed in matrix form as $\mathscr{T}\textbf{v}^{k-1}=\textbf{T}_{(1)}\textbf{v}^{[k-1]},$ where $\textbf{T}_{(1)}$ is the  matrix obtained by unfolding  $\mathscr{T}$ in the first mode, and
\begin{equation*}
\textbf{v}^{[k]}=\textbf v\otimes \textbf v\otimes\stackrel{k}{\cdots}\otimes \textbf v
\end{equation*}
denotes Kronecker exponentiation \cite{bellman1970introduction}.

Hypergraphs generalize graphs by allowing hyperedges to connect more than two nodes \cite{berge1984hypergraphs}.
An undirected hypergraph is defined as a pair $\mathcal{H}=\{\mathcal{V},\mathcal{E}\}$ where $\mathcal{V}=\{v_1,v_2,\dots,v_n\}$ is the set of nodes and  $\mathcal{E}$ is the set of hyperedges with each hyperedge $e\in\mathcal{E}$ satisfying $e\subseteq\mathcal{V}$ \cite{berge1984hypergraphs}.
Since linear systems correspond to first-order polynomial dynamics, extending graph-based models to hypergraphs naturally yields polynomial dynamical systems in which each hyperedge induces a polynomial term involving the states of its incident nodes.
In particular, for a $k$-uniform hypergraph in which each hyperedge contains exactly $k$ nodes, the resulting hypergraph dynamics is described by homogeneous polynomials of degree $k-1$.
These polynomial dynamics can be compactly represented using the adjacency tensor associated with the hypergraph.
Given an undirected hypergraph $\mathcal{H}$, the adjacency tensor $\mathscr{A}$ is defined as
\begin{equation*}\label{eq: adjacency tensor}
    \mathscr{A}_{j_1j_2\cdots j_k}=\begin{cases}
        c&\text{if } \{v_{j_1},v_{j_2},\dots,v_{j_k}\}\in\mathcal{E}\\
        0&\text{otherwise}
    \end{cases},
\end{equation*}
where the coefficient $c$ represents the interaction weight associated with the corresponding hyperedge.

\subsection{Controllability of Polynomial  Systems and Hypergraphs}
To study the controllability of hypergraph dynamics, we consider a homogeneous polynomial  system with linear control of the form
\begin{equation}\label{eq: polynomial with linear inputs}
    \dot{\textbf{x}}(t)=\textbf{f}\big(\textbf{x}(t)\big)+\textbf{B}\textbf{u}(t),
\end{equation}
where $\textbf{x}(t)\in\mathbb{R}^n$ is the system state, $\textbf{f}(\cdot)$ is an odd-degree homogeneous polynomial vector field (even-degree systems are generally uncontrollable), $\textbf{B}\in\mathbb{R}^{n\times m}$ is the control matrix, and $\textbf{u}(t)\in\mathbb{R}^m$ is the control signal.
The controllability of nonlinear polynomial systems of the form \eqref{eq: polynomial with linear inputs} has been extensively investigated from a geometric control perspective \cite{mayne1973geometric, hermann1977nonlinear, jurdjevic1981control, jurdjevic1983polynomial, baillieul1981controllability, jurdjevic1985polynomial, jurdjevic1997geometric}.
A central result is the Lie algebra rank condition, which provides a characterization of strong controllability for such systems.

\begin{proposition}[\cite{jurdjevic1997geometric}]\label{thm: jurdjevic and kupka}
   The homogeneous polynomial  system with linear input \eqref{eq: polynomial with linear inputs} is strongly controllable if and only if the Lie algebra spanned by the set of vector fields $\{\textbf{f},\textbf{b}_{1},\textbf{b}_{2},\dots,\textbf{b}_{m}\}$ is full rank, where $\textbf{b}_{j}$ are the $j$th columns of  $\textbf{B}$.
    Moreover, the Lie algebra has full rank $n$ at all points of $\mathbb{R}^n$ if and only if it is of full rank at the origin.
\end{proposition}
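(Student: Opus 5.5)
The plan is to prove the equivalence in two directions, using the standard tools of geometric control for systems with drift, and to exploit homogeneity of odd degree to handle the drift term.

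\emph{Necessity.} If the Lie algebra $\mathcal{L}=\mathrm{Lie}\{\textbf{f},\textbf{b}_1,\dots,\textbf{b}_m\}$ has rank $r<n$ at some point, I will use the Hermann--Nagano/Sussmann orbit theorem. Since all the vector fields are analytic (indeed polynomial), the orbit through that point is an immersed submanifold of dimension $r$. Reachable sets lie in the orbit, so the system cannot be controllable.

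\emph{Sufficiency.} First, the Lie algebra rank condition together with analyticity gives accessibility: by Krener's theorem, reachable sets from any point have nonempty interior. Second, I must remove the obstruction caused by drift, and here odd degree is essential. Since $\textbf{f}$ is homogeneous of odd degree $d$, we have $\textbf{f}(-\textbf{x})=-\textbf{f}(\textbf{x})$. For the drift I will use the Jurdjevic--Kupka enlargement technique:
\begin{itemize}
\item The closure of the reachable set is unchanged if we add to the family any vector field in the closed convex cone generated by it.
\item Since the controls $\textbf{u}$ are unbounded, the lines $\pm\textbf{b}_j$ belong to the saturated family.
\item Through the scaling $\textbf{x}\mapsto\lambda \textbf{x}$, the vector field $\textbf{f}$ rescales as $\lambda^{d}\textbf{f}$. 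Combining this with large inputs that move the state quickly along the directions $\textbf{b}_j$, the flows $\exp(s\textbf{b})\circ\exp(t\textbf{f})\circ\exp(-s\textbf{b})$ yield, in the limit $s\to\infty$ after normalization, the vector fields $\pm\mathrm{ad}^{d}_{\textbf{b}}\textbf{f}$. These are constant fields, and because $d$ is odd their sign can be reversed by replacing $\textbf{b}$ with $-\textbf{b}$.
\item Iterating the argument with mixed brackets $\mathrm{ad}_{\textbf{b}_{j_1}}\cdots\mathrm{ad}_{\textbf{b}_{j_d}}\textbf{f}$ (obtained by polarization), and then with brackets of these constants with $\textbf{f}$, shows that the saturate contains every constant vector field in the span $V$ generated by repeatedly bracketing the $\textbf{b}_j$'s into $\textbf{f}$.
\end{itemize}
The key observation is that $V$ equals the evaluation of $\mathcal{L}$ at the origin. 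The reason is that at $\textbf{x}=0$, every bracket containing an uncontracted factor of $\textbf{x}$ vanishes. So full rank at $0$ gives $V=\mathbb{R}^n$, and the saturate then contains all constant fields, which generate the whole space. Controllability follows, and the trajectories can be made strong (fixed time) by using the same scaling.

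\emph{Second statement.} One direction is trivial. For the other, I will show that the rank of $\mathcal{L}$ at an arbitrary point $\textbf{x}$ is at least the dimension of $V$. This holds because the constant fields in $V$ are themselves elements of $\mathcal{L}$: iterated brackets $\mathrm{ad}$ of the constant $\textbf{b}_j$'s into the degree-$d$ field $\textbf{f}$, taken $d$ times, produce constants, and constants have the same value everywhere. Hence, if $\mathcal{L}(0)=V=\mathbb{R}^n$, then $\mathcal{L}(\textbf{x})\supseteq V=\mathbb{R}^n$ for all $\textbf{x}$.

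\emph{Main obstacle.} I expect the main difficulty to be the rigorous limit argument that puts $\pm\mathrm{ad}^{d}_{\textbf{b}}\textbf{f}$ into the saturate. The conjugation by large-magnitude flows must be combined with time rescaling so that the leading term dominates. I would try first the explicit expansion
\[
\textbf{f}(\textbf{x}+s\textbf{b})=\sum_{i=0}^{d}\frac{s^i}{i!}\,(\textbf{b}\cdot\nabla)^i\textbf{f}(\textbf{x}).
\]
Multiplying by $s^{-d}$ and letting $s\to\infty$ isolates the top term, which is constant. The conic-closure property of the saturate then places this constant field in the saturate.
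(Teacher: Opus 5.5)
The paper gives no proof of this proposition; it is quoted from Jurdjevic (it is the Jurdjevic--Kupka theorem). Your plan follows the argument of that source: the orbit theorem for necessity, and for sufficiency the Lie saturate, conjugation by the constant flows $\exp(s\textbf{b})$, the limit $s^{-d}\textbf{f}(\textbf{x}+s\textbf{b})\to\textbf{f}(\textbf{b})$, and oddness of $d$ to obtain both signs.

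The step that does not hold up as written is the one you call the key observation, $\mathcal{L}(0)=V$. Your justification (brackets with an ``uncontracted'' factor of $\textbf{x}$ vanish at $0$) only shows that $\mathcal{L}(0)$ is spanned by the values of the \emph{constant} bracket monomials in $\mathcal{L}$. It does not show that those constants lie in $V$, which you generated only from nested brackets $\mathrm{ad}_{\textbf{c}_1}\cdots\mathrm{ad}_{\textbf{c}_d}\textbf{f}$ of constants already obtained. Constants in $\mathcal{L}$ also arise from other bracket shapes. For example, with $d=3$ the field $\mathrm{ad}_{\textbf{b}_3}\mathrm{ad}_{\textbf{b}_4}\mathrm{ad}_{\textbf{b}_5}[[\textbf{b}_1,\textbf{f}],[\textbf{b}_2,\textbf{f}]]$ is constant. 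The inclusion $\mathcal{L}(0)\subseteq V$ is exactly what sufficiency needs. Your saturate argument only places the constants of $V$ in the saturate, so full rank at $0$ has to force $V=\mathbb{R}^n$.

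The repair is short:
\begin{itemize}
\item By construction $V$ is closed under the symmetric $d$-linear form of $\textbf{f}$, so $\textbf{f}(V)\subseteq V$. Also $\textbf{b}_j\in V$, so every generator is tangent to $V$.
\item Fields tangent to a linear subspace are closed under brackets. Indeed $[\textbf{Y},\textbf{Z}](\textbf{x})=D\textbf{Z}(\textbf{x})\textbf{Y}(\textbf{x})-D\textbf{Y}(\textbf{x})\textbf{Z}(\textbf{x})$, and for $\textbf{x}\in V$, $D\textbf{Z}(\textbf{x})$ maps $V$ into $V$ whenever $\textbf{Z}(V)\subseteq V$.
\item Hence all of $\mathcal{L}$ is tangent to $V$. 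Evaluating at $0\in V$ gives $\mathcal{L}(0)\subseteq V$, and together with your inclusion $V\subseteq\mathcal{L}(0)$ this gives equality.
\end{itemize}
For the second statement you do not need $V$ at all: constant elements of $\mathcal{L}$ have the same value everywhere, so $\mathcal{L}(\textbf{x})\supseteq\mathcal{L}(0)$.

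Two smaller points also need tightening.
\begin{itemize}
\item For $d\ge 3$, $\textbf{f}$ is generally not complete (e.g.\ $\dot x=x^3$). The saturate and normalizer results must therefore be used in a version that allows non-complete fields, as Jurdjevic and Kupka do.
\item The ordinary saturate allows arbitrary positive time rescalings, so it yields controllability only, not the strong (small-time) version. One fix is to run your enlargement in the time-bounded saturate. There your normalization by $s^{-d}\le 1$ is admissible, and large constants come from $\textbf{f}(\mu\textbf{b})=\mu^d\textbf{f}(\textbf{b})$. Another fix uses homogeneity, $\mathcal{R}(\lambda\textbf{x}_0,\le\lambda^{1-d}T)=\lambda\,\mathcal{R}(\textbf{x}_0,\le T)$ at $\textbf{x}_0=0$, together with the time-reversed system with drift $-\textbf{f}$, and routes every transfer through the origin.
\end{itemize}
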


Since homogeneous polynomial systems can be equivalently represented using tensors, the system  \eqref{eq: polynomial with linear inputs} can be rewritten  as
\begin{equation}\label{eq: tensor with linear inputs}
    \dot{\textbf{x}}(t)=\mathscr{A}\textbf{x}(t)^{k-1}+\textbf{B}\textbf{u}(t),
\end{equation}
where $\mathscr A\in\mathbb{R}^{n\times n\times\stackrel{k}{\cdots}\times n}$ is a $k$th-order adjacency tensor with even $k$.
This formulation encodes higher-order interactions and provides a direct connection between polynomial dynamics and hypergraph representations.
Building on this representation, Chen et al. \cite{chen2021controllability} derived a generalized Kalman's rank condition to verify the exact controllability of tensor-based homogeneous polynomial systems with linear input.

\begin{proposition}[\cite{chen2021controllability}]\label{thm: cohg}
The tensor-based polynomial system (\ref{eq: tensor with linear inputs}) is strongly controllable if and only if the nonlinear controllability matrix  defined as
\begin{align*}
\textbf C=\begin{bmatrix}\textbf M_0 & \textbf M_1 &\cdots& \textbf M_{n-1}\end{bmatrix},
\end{align*}
where $\textbf M_0=\textbf B$, and 
each $\textbf M_j$  is formed from 
\begin{equation*}
\begin{split}
    \Big\{&\mathscr A\textbf v_1\textbf{v}_2\cdots\textbf v_{k-1}\text{ }\big|\text{ }\textbf v_{i}\in\mathrm{span}\left(\begin{bmatrix}\textbf M_0 & \textbf M_1 &\cdots& \textbf M_{j-1}\end{bmatrix}\right) \\ &\text{ for } i=1,2,\dots,k-1\Big\}
    \end{split}
\end{equation*}
for $j=1,2,\dots,n-1$, has full rank $n$.
\end{proposition}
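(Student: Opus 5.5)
We prove Proposition~\ref{thm: cohg} by reducing it to Proposition~\ref{thm: jurdjevic and kupka}. That result says strong controllability of \eqref{eq: tensor with linear inputs} is equivalent to the Lie algebra $\mathcal{L}$ generated by $\{\textbf{f},\textbf{b}_1,\dots,\textbf{b}_m\}$, with $\textbf{f}(\textbf{x})=\mathscr{A}\textbf{x}^{k-1}$, having rank $n$ at the origin. So it suffices to show that the subspace $\mathcal{L}(0)=\{X(0)\mid X\in\mathcal{L}\}\subseteq\mathbb{R}^n$ equals $\mathrm{span}(\textbf{C})$.

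\emph{Step 1: $\mathrm{span}(\textbf{C})\subseteq\mathcal{L}(0)$.} Since the $\textbf{b}_i$ are constant vector fields, the bracket $[\textbf{b}_i,\textbf{f}]$ equals, up to sign, the directional derivative $D\textbf{f}(\textbf{x})\textbf{b}_i$. This is a homogeneous polynomial of degree $k-2$ in which one slot of the (symmetrized) tensor is filled by $\textbf{b}_i$. We assume without loss of generality that $\mathscr{A}$ is symmetric in its last $k-1$ modes, since this does not change $\textbf{f}$. Taking $k-1$ iterated brackets with constant fields $\textbf{v}_1,\dots,\textbf{v}_{k-1}$ then gives
\[
[\textbf{v}_{k-1},[\cdots[\textbf{v}_1,\textbf{f}]\cdots]] = \pm (k-1)!\,\mathscr{A}\textbf{v}_1\textbf{v}_2\cdots\textbf{v}_{k-1},
\]
which is again a constant vector field. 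By induction on $j$, every vector in $\mathrm{span}[\textbf{M}_0\ \cdots\ \textbf{M}_{j-1}]$ is realized as a constant vector field in $\mathcal{L}$. Hence the new columns of $\textbf{M}_j$ are also constant fields in $\mathcal{L}$, and multilinearity handles linear combinations of the $\textbf{v}_i$.

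\emph{Step 2: $\mathcal{L}(0)\subseteq\mathrm{span}(\textbf{C})$.} This is the main obstacle, because $\mathcal{L}$ also contains nonconstant fields such as $\textbf{f}$ itself, $[\textbf{f},[\textbf{b},\textbf{f}]]$, and so on. Let $V=\mathrm{span}(\textbf{C})$. Our plan is to show that every Lie monomial in the generators evaluated at $0$ lies in $V$. Each such monomial is a polynomial vector field. We will prove by induction on bracket length the stronger statement that every bracket $X$ satisfies the following: each homogeneous component of $X$, viewed as a symmetric multilinear map, sends $V\times\cdots\times V$ into $V$.

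We now check that this property holds for the generators and is preserved by brackets. For the generators, $\textbf{b}_i\in V$ trivially. For $\textbf{f}$, we need $\mathscr{A}V^{k-1}\subseteq V$. This holds because the chain $\mathrm{span}[\textbf{M}_0\cdots\textbf{M}_j]$ is increasing in dimension and stabilizes by $j\le n-1$, and at the stable point it is closed under $\mathscr{A}$; this justifies truncating $\textbf{C}$ at $\textbf{M}_{n-1}$. For brackets, $[X,Y]=DY\,X-DX\,Y$. Since the derivative of a homogeneous polynomial map is obtained by fixing one argument of its multilinear form, compositions and derivatives of $V$-invariant multilinear maps remain $V$-invariant. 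The value at the origin is the degree-zero component, which lies in $V$ by the invariance property.

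Combining Steps 1 and 2 gives $\mathcal{L}(0)=\mathrm{span}(\textbf{C})$. Therefore $\mathrm{rank}\,\textbf{C}=n$ if and only if the Lie algebra rank condition holds at the origin, which by Proposition~\ref{thm: jurdjevic and kupka} is equivalent to strong controllability. The odd degree $k-1$ of $\textbf{f}$ enters only through Proposition~\ref{thm: jurdjevic and kupka}.
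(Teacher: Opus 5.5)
Your reduction is the route the paper relies on. The paper does not reprove this proposition (it is quoted from Chen et al.); its only justification is the remark that the columns of $\mathbf{C}$ span the Lie algebra at the origin, which together with Proposition~\ref{thm: jurdjevic and kupka} is exactly your argument. Step~2 is sound. It amounts to the fact that vector fields $X$ with $X(V)\subseteq V$ form a Lie subalgebra. Your stabilization argument correctly gives $\mathscr{A}V^{k-1}\subseteq V$, which also justifies truncating at $\mathbf{M}_{n-1}$. Step~1 uses the right bracket computation.

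The one genuine flaw is the claim that symmetry of $\mathscr{A}$ in its last $k-1$ modes is without loss of generality. Symmetrizing leaves $\mathbf{f}$ unchanged but not $\mathbf{C}$, because $\mathbf{C}$ is built from $\mathscr{A}$ itself. For non-symmetric tensors the statement is in fact false. Take $n=3$, $k=4$, $\mathbf{B}=[\mathbf{e}_1\ \mathbf{e}_2]$ (standard basis vectors), and let the only nonzero entries of $\mathscr{A}$ be $\mathscr{A}_{3121}=1$ and $\mathscr{A}_{3211}=-1$. Then $\mathscr{A}\mathbf{x}^{3}=(x_1x_2x_1-x_2x_1x_1)\mathbf{e}_3=0$, so the system is $\dot{\mathbf{x}}=\mathbf{B}\mathbf{u}$, which is not controllable. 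Yet $\mathscr{A}\mathbf{e}_1\mathbf{e}_2\mathbf{e}_1=\mathbf{e}_3$ is a column of $\mathbf{M}_1$, so $\mathrm{rank}\,\mathbf{C}=3$.

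In your proof the failure sits in Step~1. Iterated brackets $[\mathbf{v}_{k-1},[\cdots[\mathbf{v}_1,\mathbf{f}]\cdots]]$ only produce the symmetrized values $\pm\sum_{\sigma}\mathscr{A}\mathbf{v}_{\sigma(1)}\cdots\mathbf{v}_{\sigma(k-1)}$, not every column of $\mathbf{M}_j$. Step~2 survives without symmetry. The fix is to invoke a hypothesis rather than a WLOG: in this setting $\mathscr{A}$ is the adjacency tensor of an undirected hypergraph and is therefore supersymmetric. In general, one must define $\mathbf{C}$ using the symmetrized tensor.
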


The columns of $\textbf{C}$ span the Lie algebra evaluated at the origin, providing a Kalman-like test for strong controllability for polynomial dynamical systems.
Based on this condition, Chen et al. further studied the problem of selecting a minimum set of driver nodes for hypergraphs, whose control suffices to guarantee strong controllability \cite{chen2021controllability}.
Driver node selection is, in general, a combinatorial optimization problem, for which a greedy heuristic was proposed.
However, this approach faces both practical and theoretical limitations.
Practically, constructing  the nonlinear controllability matrix incurs substantial computational cost that grows rapidly with both system dimension and polynomial degree.
Each iteration of greedy driver node selection requires multiple tensor-vector multiplications or Kronecker exponentiations followed by singular value decompositions, rendering the approach computationally prohibitive for large-scale systems.
From a theoretical perspective, any criterion based on the nonlinear controllability matrix depends on exact knowledge of system parameters, which limits its applicability in real-world settings where parameters are uncertain or unavailable.

\subsection{Linear Structural Controllability}
Constraints of scalability and system identification are longstanding issues for verifying controllability, even for linear control systems of the form
\begin{equation}\label{eq: linear system}
    \dot{\textbf{x}}(t)= \textbf{A} \textbf{x}(t) + \textbf{B} \textbf{u}(t).
\end{equation}
These limitations stem from the fact that the set of controllable linear systems is open and dense with respect to the standard matrix norm, meaning that almost any small perturbation of system parameters can render a system controllable.

\begin{proposition}[\cite{lee1967foundations}]\label{thm: Av Bv is open and dense}
    If the linear control system \eqref{eq: linear system} is controllable, then there exists an $\varepsilon>0$ such that any perturbed system
    \begin{equation}\label{eq: perturb}
        \dot{\textbf{x}}(t)=\tilde{\textbf{A}}\textbf{x}(t)+\tilde{\textbf{B}}\textbf{u}(t),
    \end{equation}
    where $\|\tilde{\textbf{A}}-\textbf{A}\|<\varepsilon$ and $\|\tilde{\textbf{B}}-\textbf{B}\|<\varepsilon$, is also controllable. If the linear control system \eqref{eq: linear system} is not controllable, then for each $\varepsilon>0$ there exists a controllable system of the form \eqref{eq: perturb} within the same $\varepsilon$-neighborhood of \textbf{A} and \textbf{B}.
\end{proposition}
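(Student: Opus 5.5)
The natural route is through Kalman's rank condition: controllability of \eqref{eq: linear system} is equivalent to the controllability matrix
\[
\textbf{K}(\textbf{A},\textbf{B})=\begin{bmatrix}\textbf{B} & \textbf{A}\textbf{B} & \cdots & \textbf{A}^{n-1}\textbf{B}\end{bmatrix}\in\mathbb{R}^{n\times nm}
\]
having rank $n$. This holds if and only if some $n\times n$ minor of $\textbf{K}(\textbf{A},\textbf{B})$ is nonzero, equivalently if and only if
\[
p(\textbf{A},\textbf{B})=\det\big(\textbf{K}(\textbf{A},\textbf{B})\textbf{K}(\textbf{A},\textbf{B})^\top\big)\neq 0 .
\]
The key observation is that each entry of $\textbf{K}(\textbf{A},\textbf{B})$ is a polynomial in the $n^2+nm$ entries of $(\textbf{A},\textbf{B})$. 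Hence $p$ is a polynomial, and in particular a continuous function, on $\mathbb{R}^{n^2+nm}$. Both claims of \cref{thm: Av Bv is open and dense} will follow from this.

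\emph{Openness.} Suppose $(\textbf{A},\textbf{B})$ is controllable, so $p(\textbf{A},\textbf{B})\neq0$. By continuity of $p$, there is an $\varepsilon>0$ such that $p(\tilde{\textbf{A}},\tilde{\textbf{B}})\neq0$ whenever $\|\tilde{\textbf{A}}-\textbf{A}\|<\varepsilon$ and $\|\tilde{\textbf{B}}-\textbf{B}\|<\varepsilon$. Every such perturbed system \eqref{eq: perturb} is therefore controllable. Since all norms on the finite-dimensional space are equivalent, the specific matrix norm does not matter.

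\emph{Density.} Suppose $(\textbf{A},\textbf{B})$ is not controllable, so $p(\textbf{A},\textbf{B})=0$. First I show that $p$ is not the zero polynomial by exhibiting one controllable pair of the same dimensions. Take $\textbf{A}_0$ to be the $n\times n$ nilpotent shift with ones on the subdiagonal, and $\textbf{B}_0=[\textbf{e}_1,0,\dots,0]$. Then $\textbf{A}_0^{j}\textbf{e}_1=\textbf{e}_{j+1}$, so the first column of each block $\textbf{A}_0^j\textbf{B}_0$ is $\textbf{e}_{j+1}$, and $\textbf{K}(\textbf{A}_0,\textbf{B}_0)$ has rank $n$.

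Next, restrict $p$ to the line through these two points. Define the univariate polynomial
\[
q(t)=p\big(\textbf{A}+t(\textbf{A}_0-\textbf{A}),\ \textbf{B}+t(\textbf{B}_0-\textbf{B})\big).
\]
It satisfies $q(1)=p(\textbf{A}_0,\textbf{B}_0)\neq0$, so $q$ is a nonzero polynomial and has only finitely many roots. Consequently, for every $\varepsilon>0$ there is some $t$ with $0<t\,\max(\|\textbf{A}_0-\textbf{A}\|,\|\textbf{B}_0-\textbf{B}\|)<\varepsilon$ and $q(t)\neq0$. The corresponding pair $\big(\textbf{A}+t(\textbf{A}_0-\textbf{A}),\ \textbf{B}+t(\textbf{B}_0-\textbf{B})\big)$ lies within the $\varepsilon$-neighborhood of $(\textbf{A},\textbf{B})$ and is controllable, as required.

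The main step is density, and specifically the reduction to a single variable. Showing directly that the zero set of a nonzero multivariate polynomial has empty interior takes some work. Restricting $p$ to the segment joining $(\textbf{A},\textbf{B})$ to the explicit controllable pair $(\textbf{A}_0,\textbf{B}_0)$ avoids this, because a nonzero univariate polynomial obviously has isolated roots. The only genuine input is the existence of one controllable pair for every $n$ and every $m\geq1$, which the shift construction supplies.
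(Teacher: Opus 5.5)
Your proof is correct. The openness half is the same continuity argument the paper relies on. The paper does not prove this proposition itself but cites Lee and Markus, and its proof of \cref{thm: At Bv is open and dense} reproduces their argument with the nonlinear controllability matrix in place of $\textbf{K}$; there, openness is exactly the statement that linearly independent rows stay independent under small perturbations.

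The density half takes a different route. Lee and Markus perturb $(\textbf{A},\textbf{B})$ to a nearby pair whose entries are algebraically independent over $\mathbb{Q}$. Every $n\times n$ minor of the controllability matrix is an integer-coefficient polynomial in those entries, so any minor that is not identically zero is nonzero at the perturbed pair. You instead restrict $p=\det(\textbf{K}\textbf{K}^\top)$ to the segment from $(\textbf{A},\textbf{B})$ to an explicit controllable pair, and use only that a nonzero univariate polynomial has finitely many roots.

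Your version is more elementary: it avoids the cardinality or transcendence-degree argument that algebraically independent reals exist arbitrarily close to any point. It also isolates the one substantive input, namely that the rank-detecting polynomial is not identically zero, and certifies it with a witness. The algebraic-independence argument needs the same fact but leaves it implicit. In the paper's nonlinear version, the justification ``each determinant is a polynomial of the entries'' only gives nonvanishing for minors that are nonzero polynomials.

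In exchange, the Lee--Markus route gives a direction-free notion of genericity. A single algebraically independent perturbation avoids the zero set of every nonzero integer-coefficient polynomial at once. It therefore handles all minors, several generic properties, or a prescribed zero pattern simultaneously, and it is the form the paper transplants to the polynomial setting.
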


This result indicates that arbitrarily small perturbations to the system matrices can change the controllability of the system.
In practice, this implies verifying controllability of \eqref{eq: linear system} depends upon infinitely precise knowledge of the system parameters.
The sensitivity highlighted by Proposition \ref{thm: Av Bv is open and dense} motivated Lin’s concept of structural controllability, which explicitly accounts for the fact that ``only some of these entries are known with 100 percent precision'', referring to the zero entries of the system matrices \cite{lin1974structural}.
The sparsity patterns of the matrices $\textbf{A}$ and $\textbf{B}$ define a directed graph $\mathcal{G} = \{\mathcal{V}, \mathcal{E}\}$.
Each state  $x_j$ corresponds to a state node $v_j \in \mathcal{V}$, and each control input $u_j$ corresponds to a control node $v_{n+j} \in \Vs$.
A directed state edge $(v_j, v_i)$ exists if $\textbf{A}_{ij} \neq 0$, and a directed control edge $(v_{n+j}, v_i)$ exists if $\textbf{B}_{ij} \neq 0$.
Independent of the numerical values of the system parameters, this graph encodes the parameter support of \eqref{eq: linear system}.
The graph $\Gs$ is said to be structurally controllable if there exists at least one numerical realization of $\textbf{A}$ and $\textbf{B}$ consistent with the zero-nonzero pattern that renders the system controllable.

\begin{proposition}[\cite{lin1974structural}]\label{thm: lin structural controllability}
    Consider the linear control system \eqref{eq: linear system} with a single control input. The system is structurally controllable if and only if the associated graph $\mathcal{G}$ satisfies two conditions: (i) Accessibility: every state node $v \in \mathcal{V}$ is reachable from a directed path in $\mathcal{G}$ beginning at $v_{n+1}$; (ii) No dilations: for every subset of state node $\mathcal{V}' \subseteq \mathcal{V}$, the number of in-neighbors of $\mathcal{V}'$ is at least $|\mathcal{V}'|$ ($|\cdot|$ denotes set cardinality).
\end{proposition}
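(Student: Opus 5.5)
We prove the two implications of Proposition~\ref{thm: lin structural controllability} separately, working with a single input column $\textbf{b}$, so that $\Bv$ is an $n\times 1$ pattern. We use the PBH characterization: $(\textbf{A},\textbf{b})$ is controllable if and only if $\rank[\textbf{A}-\lambda\textbf{I},\ \textbf{b}]=n$ for every $\lambda\in\mathbb{C}$.

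\emph{Necessity.} We show that each failed condition forces a PBH rank deficiency for every numerical realization.

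First, suppose accessibility fails. Let $\mathcal{V}''$ be the nonempty set of state nodes not reachable from $v_{n+1}$, and order the states so that $\mathcal{V}''$ comes first. No edge enters $\mathcal{V}''$ from the reachable part or from the input. Hence every realization has the block form
\[
\textbf{A}=\begin{bmatrix}\textbf{A}_{11}&\textbf{0}\\ \textbf{A}_{21}&\textbf{A}_{22}\end{bmatrix},\qquad \textbf{b}=\begin{bmatrix}\textbf{0}\\ \textbf{b}_2\end{bmatrix}.
\]
Take a left eigenvector $\textbf{w}_1$ of $\textbf{A}_{11}$ and set $\textbf{w}=[\textbf{w}_1^\top\ \textbf{0}]$. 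This $\textbf{w}$ is a left eigenvector of $\textbf{A}$ that annihilates $\textbf{b}$, so the system is uncontrollable.

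Second, suppose there is a dilation $\mathcal{V}'$, meaning the in-neighbor set $T(\mathcal{V}')$ (which may contain $v_{n+1}$) satisfies $|T(\mathcal{V}')|<|\mathcal{V}'|$. The rows of $[\textbf{A},\ \textbf{b}]$ indexed by $\mathcal{V}'$ are supported only on the columns indexed by $T(\mathcal{V}')$. These rows are therefore linearly dependent, and $\rank[\textbf{A},\textbf{b}]<n$. This is the PBH test at $\lambda=0$, so the system is uncontrollable.

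\emph{Sufficiency.} Controllability is equivalent to $\det(\textbf{C}\textbf{C}^\top)\neq 0$, where $\textbf{C}$ is the Kalman controllability matrix. This determinant is a polynomial in the free nonzero entries. It is therefore enough to exhibit one realization consistent with the pattern that is controllable; the controllable realizations then form a generic set, consistent with Proposition~\ref{thm: Av Bv is open and dense}.

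To build that realization, view the no-dilation condition through Hall's theorem. It says the bipartite graph joining in-neighbors (state nodes together with $v_{n+1}$) to state nodes has a matching that covers every state node. In the digraph, such a matching decomposes into one directed path (a stem) starting at $v_{n+1}$ together with disjoint directed cycles, and together these cover all of $\mathcal{V}$. Accessibility then guarantees that each cycle receives an edge from a node already reached (a bud). This yields Lin's cactus spanning structure.

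We set every entry outside the cactus to zero and choose generic weights on the cactus. We then show controllability by induction on the number of buds: a stem is a controllable chain, and attaching a cycle through a distinguishing edge preserves controllability provided the cycle weights make its eigenvalues distinct from those already present.

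\emph{Main obstacle.} The hard step is this induction: verifying that attaching a cycle via a bud preserves full PBH rank for generic weights. Our first attempt is a direct PBH argument. Suppose a left eigenvector $\textbf{w}$ satisfies $\textbf{w}^\top\textbf{b}=0$. Propagate the equation $\textbf{w}^\top(\textbf{A}-\lambda\textbf{I})=0$ backward along the stem to force the entries of $\textbf{w}$ on the stem to vanish. Then use the bud edge to force the entries on each attached cycle to vanish, which gives $\textbf{w}=\textbf{0}$, a contradiction. The genericity of the weights is what excludes coincident eigenvalues between different cycles, which is where the argument needs care.
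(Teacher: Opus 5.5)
The paper gives no proof of this proposition; it is quoted from Lin. Lin's argument follows the same route as yours: reducibility and rank deficiency of $[\mathbf{A},\ \mathbf{b}]$ for necessity, and a spanning cactus for sufficiency. Your necessity half is complete. The passage from Hall's theorem plus accessibility to a stem-plus-buds spanning structure is also sound, with two small repairs. First, if the matching does not use $v_{n+1}$, the stem is trivial and the first buds hang directly off $v_{n+1}$, so their distinguished edge is an entry of $\mathbf{b}$. Second, the cactus-supported realization does not have the prescribed zero-nonzero pattern. You must close with your polynomial remark: a polynomial nonvanishing at the cactus point is nonvanishing on a dense open set, and that set meets the set where every free entry is nonzero.

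The genuine gap is the cactus lemma. You flag it as the main obstacle but do not prove it, and the elimination order you sketch fails. Suppose a bud with head $c_1$ and weight $d$ is attached at a stem vertex $s_j$, and let $a_j$ be the weight of $s_j\to s_{j+1}$. The $s_j$-th component of $w^\top(\mathbf{A}-\lambda\mathbf{I})=0$ reads $a_j w_{s_{j+1}}+d\,w_{c_1}=\lambda w_{s_j}$, so the stem cannot be cleared before the bud. Also, at $\lambda=0$ the top equation $\lambda w_{s_p}=0$ says nothing. Eigenvalue separation is the right genericity condition, but it closes your inductive step only together with a fact your sketch never uses.

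Here is how the step closes. Write the enlarged pair as $\tilde{\mathbf{A}}=\begin{bmatrix}\mathbf{A}&\mathbf{0}\\ d\,e_{c_1}e_t^\top&\mathbf{A}_C\end{bmatrix}$ and $\tilde{\mathbf{b}}=\begin{bmatrix}\mathbf{b}\\ \mathbf{0}\end{bmatrix}$. Let $w=(w_K,w_C)$ be a left eigenvector for $\lambda$ with $w^\top\tilde{\mathbf{b}}=0$.
\begin{itemize}
\item If $w_C=0$, the induction hypothesis gives $w_K=0$.
\item Otherwise $\lambda\in\sigma(\mathbf{A}_C)$, so $\lambda\neq0$ and, by genericity, $\lambda\notin\sigma(\mathbf{A})$. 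Then $w_C$ is the cycle's left eigenvector, whose entries are all nonzero. Moreover $w_K^\top=d\,w_{c_1}e_t^\top(\lambda\mathbf{I}-\mathbf{A})^{-1}$, so $w^\top\tilde{\mathbf{b}}=d\,w_{c_1}G_t(\lambda)$ with $G_t(s)=e_t^\top(s\mathbf{I}-\mathbf{A})^{-1}\mathbf{b}$.
\end{itemize}

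The missing observation is about walks in a cactus. Every walk from $v_{n+1}$ to $t$ follows the unique route through the stem and buds, possibly with extra full turns around the route's cycles. Hence $G_t(s)=\kappa\,s^{N}/\prod_{C'}(s^{\ell_{C'}}-P_{C'})$, where $\kappa\neq0$ is the product of the route weights, $N\in\mathbb{Z}$, and $P_{C'}$ is the weight product of the route cycle $C'$ of length $\ell_{C'}$. Thus $G_t$ has no nonzero zeros, so $G_t(\lambda)\neq0$, which gives the contradiction.

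When $t=v_{n+1}$, the off-diagonal block vanishes and $\tilde{\mathbf{b}}$ gains the entry $d$ at $c_1$. Then $w_K=0$ because $\lambda\notin\sigma(\mathbf{A})$, and $w^\top\tilde{\mathbf{b}}=d\,w_{c_1}\neq0$. The only genericity used is that different cycles have disjoint spectra; cycle eigenvalues are automatically nonzero.
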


Only when both conditions are satisfied can the sparsity pattern of $(\textbf{A},\textbf{B})$ meet the exact controllability criterion according to the PBH test \cite{lin1974structural}.
The accessibility condition ensures that the control inputs can influence all state variables, while the absence of dilations prevents structural rank deficiencies in the controllability matrix.
Since both conditions depend solely on the graph structure, they enable scalable algorithms for verifying controllability.
In particular, \cref{thm: lin structural controllability} can be verified via  maximum matching algorithms on $\Gs$, which guides driver node selection in large-scale graphs \cite{liu2011controllability}.
Structural controllability provides a framework that is both computationally scalable and robust to uncertainties in system identification.
In the following section, we extend these concepts to polynomial dynamics and their associated hypergraph representations.

\section{Structural Controllability of Hypergraphs}\label{sec:3}

This section presents the main theoretical results extending structural controllability to homogeneous polynomial systems via hypergraph representations.
We first characterize identification limitations, then introduce the necessary hypergraph structures, develop a structural controllability framework, and finally present scalable verification algorithms.

\subsection{Limitations Imposed by System Identification}\label{sec: 3a}
Applying \cref{thm: jurdjevic and kupka} through rank verification of the nonlinear controllability matrix is fundamentally limited by the accuracy of system identification. In particular, an analogue of Proposition \ref{thm: Av Bv is open and dense} in linear systems also holds for tensor-based homogeneous polynomial systems. Consequently, controllability is not robustly distinguishable from uncontrollability under small parameter perturbations or uncertainty.
\begin{proposition}\label{thm: At Bv is open and dense}
    If the homogeneous polynomial system with linear input \eqref{eq: tensor with linear inputs} is controllable according to \cref{thm: jurdjevic and kupka}, then there exists an $\varepsilon>0$ such that every system
    \begin{equation}  \label{eq: similarpolynomial}     
         \dot{\textbf{x}}(t)=\tilde{\mathscr{A}}\textbf{x}(t)^{k-1}+\tilde{\textbf{B}}\textbf{u}(t),
    \end{equation}
    where $\|\tilde{\mathscr{A}}-\mathscr{A}\|<\varepsilon$ and $\|\tilde{\textbf{B}}-\textbf{B}\|<\varepsilon,$
    is also controllable. If the  system (\ref{eq: tensor with linear inputs}) is not controllable, then for each $\varepsilon>0$, there exists a controllable system of the form (\ref{eq: similarpolynomial}) within the same $\epsilon$-neighborhood of $\mathscr A$ and $\textbf B$.
\end{proposition}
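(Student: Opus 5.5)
Both claims follow from one observation: controllability of \eqref{eq: tensor with linear inputs} is the nonvanishing of some minor of an iterated-Lie-bracket matrix at the origin, and each such minor is a polynomial in the entries of $(\mathscr{A},\textbf{B})$. Openness is then continuity, and density is the standard fact that a nonzero polynomial cannot vanish on any open set.

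\emph{Reduction to polynomial minors.} By \cref{thm: jurdjevic and kupka}, controllability is equivalent to the Lie algebra generated by $\{\textbf{f},\textbf{b}_1,\dots,\textbf{b}_m\}$ having rank $n$ at the origin. By \cref{thm: cohg}, this is equivalent to $\textbf{C}$ having full rank. Rather than work with the span-based blocks $\textbf{M}_j$, whose construction depends on the parameters, I fix a finite parameter-independent list of iterated brackets. Concretely, these are all words of the form $[\textbf{b}_{i_1},[\textbf{b}_{i_2},\dots,[\textbf{b}_{i_{k-1}},\textbf{f}]\dots]]$ and their iterates, nested to depth at most $n$. Each such bracket is a constant vector field, since $\textbf{f}$ is homogeneous of degree $k-1$ and the $\textbf{b}_i$ are constant. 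Its entries are polynomials in the entries of $\mathscr{A}$ and $\textbf{B}$, because these brackets are multilinear contractions $\mathscr{A}\textbf{v}_1\cdots\textbf{v}_{k-1}$ of previously generated vectors. Let $\textbf{W}(\mathscr{A},\textbf{B})$ be the matrix whose columns are this finite collection.

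\emph{Main obstacle.} I must argue that $\textbf{W}$ and $\textbf{C}$ have the same column span, so that controllability holds if and only if $\rank\,\textbf{W}(\mathscr{A},\textbf{B})=n$. The span of $\textbf{C}$ is generated by multilinear evaluations of $\mathscr{A}$ on vectors from a span. By multilinearity, it is therefore generated by evaluations on spanning vectors themselves, which are exactly the bracket words above. The depth bound $n$ suffices because the span grows strictly at each stage until it stabilizes, exactly as in the construction of $\textbf{C}$. With this identification, controllability becomes the condition that some $n\times n$ minor $p(\mathscr{A},\textbf{B})$ of $\textbf{W}$ is nonzero, and each such minor is a polynomial in the parameters.

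\emph{Openness.} Suppose \eqref{eq: tensor with linear inputs} is controllable. Then some minor satisfies $p(\mathscr{A},\textbf{B})\neq 0$. Since $p$ is continuous, there is an $\varepsilon>0$ such that $p(\tilde{\mathscr{A}},\tilde{\textbf{B}})\neq0$ whenever both $\|\tilde{\mathscr{A}}-\mathscr{A}\|<\varepsilon$ and $\|\tilde{\textbf{B}}-\textbf{B}\|<\varepsilon$ hold. Every such perturbed system \eqref{eq: similarpolynomial} is therefore controllable.

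\emph{Density.} Consider the sum of squares of all $n\times n$ minors of $\textbf{W}$. Its zero set in the parameter space of $(\mathscr{A},\textbf{B})$ is exactly the set of uncontrollable systems, and it is a real algebraic variety. Such a variety either is the whole space or has empty interior, since a nonzero polynomial cannot vanish on an open set. So it suffices to exhibit one controllable pair, which rules out the whole-space case. Taking $m=n$ and $\textbf{B}=\textbf{I}$ gives $\textbf{M}_0$ full rank, hence a controllable system. The uncontrollable set therefore has empty interior, so every $\varepsilon$-neighborhood of an uncontrollable $(\mathscr{A},\textbf{B})$ contains a controllable system.

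If $\textbf{B}$ must keep its given number $m$ of columns, the witness has to be chosen in $\mathbb{R}^{n\times m}$ instead. I would take $\textbf{b}_1=\textbf{e}_1$ together with a tensor $\mathscr{A}$ that propagates along a chain, e.g.\ $\mathscr{A}_{i+1,i,i,\dots,i}=1$. Then $\mathscr{A}\textbf{e}_i^{k-1}=\textbf{e}_{i+1}$, so the span of $\textbf{C}$ successively picks up $\textbf{e}_1,\dots,\textbf{e}_n$, and the system is controllable for every $m\ge1$. This witness lies in the unrestricted parameter space where the density argument operates, so it suffices.
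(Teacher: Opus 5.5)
Your argument is correct. Its skeleton matches the paper's: controllability is the nonvanishing of some $n\times n$ minor of a matrix whose entries are polynomial in $(\mathscr A,\textbf B)$, and openness follows by continuity. The density step, however, rests on a different key fact.

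The paper perturbs to a nearby pair whose entries are algebraically independent over $\mathbb{Q}$ and concludes that no $n\times n$ minor of $\tilde{\textbf C}$ vanishes. This has two gaps:
\begin{itemize}
\item It tacitly treats the span-defined blocks $\textbf M_j$ as fixed polynomial expressions in the parameters.
\item It needs, but never establishes, that at least one minor is not the zero polynomial. Algebraic independence cannot make an identically vanishing minor nonzero, so the blanket claim is not justified as written.
\end{itemize}
Your fixed finite list $\textbf W$, whose span equals that of $\textbf C$ by multilinearity, handles the first gap. Your chain witness $\textbf b_1=\textbf e_1$, $\mathscr A_{i+1,i,\dots,i}=1$, which is valid for the given $m\ge 1$, handles the second. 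You then replace algebraic independence with the elementary fact that a nonzero polynomial cannot vanish on a nonempty open set.

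What each route buys: the paper's device, once supplemented by your witness, describes good perturbations explicitly, since any algebraically independent choice works for every nontrivial minor at once. Yours gives the cleaner conclusion that the uncontrollable set is a proper real algebraic subset, hence closed and nowhere dense.

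One small precision. The iterated brackets $[\textbf b_{i_1},[\dots,[\textbf b_{i_{k-1}},\textbf f]\dots]]$ in general give, up to sign, the symmetrized sum $\sum_{\sigma}\mathscr A\textbf b_{i_{\sigma(1)}}\cdots\textbf b_{i_{\sigma(k-1)}}$. They reduce to $(k-1)!\,\mathscr A\textbf b_{i_1}\cdots\textbf b_{i_{k-1}}$ only when $\mathscr A$ is symmetric in its last $k-1$ modes. So either assume that symmetry or define $\textbf W$ directly by the contractions. The proof goes through either way, because your witness tensor is symmetric in those modes.
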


\begin{proof}
    If the homogeneous polynomial system with linear input \eqref{eq: tensor with linear inputs} is controllable, the rows of the controllability matrix $\textbf{C}$ are linearly independent such that the matrix is full rank.
    If $\|\mathscr{A}-\tilde{\mathscr{A}}\|<\varepsilon$ and $\|\textbf{B}-\tilde{\textbf{B}}\|<\varepsilon$ for sufficiently small $\varepsilon>0,$ then the rows of the controllability matrix $\tilde{\textbf{C}}$ approximate the rows of $\textbf C$ and also be linearly independent. On the other hand, if system (\ref{eq: polynomial with linear inputs}) is not controllable, the tensor $\tilde{\mathscr{A}}$ and matrix $\tilde{\textbf{B}}$ can be chosen so that $\|\tilde{\mathscr{A}}-\mathscr{A}\|<\varepsilon$ and $\|\tilde{\textbf{B}}-\textbf{B}\|<\varepsilon,$ and all entries of $\tilde{\mathscr{A}}$ and $\tilde{\textbf{B}}$ are algebraically independent over the rational numbers. The algebraic independence indicates there are no nontrivial rational polynomials between the entries of $\tilde{\mathscr{A}}$ and $\tilde{\textbf{B}}$.
    The controllability matrix $\tilde{\textbf{C}}$ has no $n\times n$ submatrix with a determinant of zero because each determinant is a polynomial of the entries of $\tilde{\mathscr{A}}$ and $\tilde{\textbf{B}}$. Thus, $\text{rank}(\tilde{\textbf{C}})=n,$ and the pair $(\tilde{\mathscr{A}},\tilde{\textbf{B}})$ is controllable.
\end{proof}

The proof generalizes the argument of Lee and Markus for \cref{thm: Av Bv is open and dense} by replacing the linear controllability matrix with the nonlinear controllability matrix \cite{lee1967foundations}. This proposition implies that exact controllability is generically satisfied but structurally fragile with respect to parameter identification. Therefore, controllability verification based on precise tensor entries may not be reliable in data-driven settings. This requirement can be relaxed to consider only which entries of $\mathscr{A}$ are nonzero. Similar to the use of graphs to represent the sparsity structure of linear systems, the sparsity of polynomials can be represented with directed hypergraphs.

We encode the support patterns of the dynamic tensor $\mathscr{A}$ and input matrix $\textbf{B}$ as a directed hypergraph. Each nonzero coefficient corresponds to a hyperedge, with the tail listing the multiplicative state variables and the head indicating the affected state components.

\begin{definition}\label{def: HG of tA Bv}
Given the odd-degree homogeneous polynomial control system~\eqref{eq: tensor with linear inputs}, 
the associated directed hypergraph 
$\mathcal{H}(\mathscr{A},\mathbf{B}) = \{\mathcal{V}, \mathcal{E}\}$ 
is defined as follows. The node set is
\[
\mathcal{V} = \{v_1, v_2, \dots, v_n, v_{n+1}, \dots, v_{n+m}\},
\]
where $\{v_1, v_2, \dots, v_n\}$ represent the state nodes and 
$\{v_{n+1}, v_{n+2}, \dots, v_{n+m}\}$ represent the control nodes.   The hyperedges $\mathcal{E}$ are partitioned into state and control hyperedges. The state hyperedges are
    \begin{align*}       
        \mathcal{E}^\text{state}=&\Big\{\big(\{v_{h}\},\{v_{t_1}, v_{t_2},\dots,v_{t_{k-1}}\}\big)\;\big|\;\mathscr{A}_{h{t_1}{t_2}\dots {t_{k-1}}}\neq 0\Big\},
    \end{align*}
    where $e^h=\{v_{h}\}$ is the head and $e^t=\{v_{t_1},v_{t_2},\dots,v_{t_{k-1}}\}$ is the tail,
    while the control hyperedges are
    \begin{equation*}
        \mathcal{E}^{\text{control}}
        =
        \Big\{
        \big(
        v_{h},
        v_t\big)
        \;|\;
        \mathbf{B}_{h(t-n)} \neq 0
        \Big\}.
    \end{equation*}
    The full set of hyperedges is then $\mathcal{E}=\mathcal{E}^\text{state}\cup\mathcal{E}^\text{control}.$
\end{definition}

\Cref{def: HG of tA Bv} is the direct, multi-way generalization of the graph-based representation for a linear system initially developed structural controllability \cite[Section II]{lin1974structural}. For notational simplicity, each hyperedge is written with a single head. More generally, the head set $e^h$ may contain multiple nodes without affecting the subsequent structural analysis.
There are several elements in \Cref{def: HG of tA Bv} that are nonstandard for hypergraphs.
First, the hyperedge set may contain both pairwise and higher-order hyperedges.
Second, this definition allows for hyperedges to contain nodes multiple times.
For example, if the $\mathscr{A}_{122}\neq0$, i.e., $\dot{{x}}_1$ is influenced by ${x}_2^2$, then there will be a hyperedge written as $(\{v_1\},\{v_2,v_2\}).$
Third, the partition of each hyperedge into heads and tails does not specify that the head and tail are mutually exclusive.
These choices, however, allow for the most direct generalization of structural control from graphs to hypergraphs.
Equipped with the correspondence between the tensor-based homogeneous polynomial dynamics and a directed hypergraph, we define the structural controllability of \eqref{eq: tensor with linear inputs} accordingly.
We say two hypergraphs $\mathcal{H}(\mathscr{A},\textbf{B})$ and $\mathcal{H}(\tilde{\mathscr{A}},\tilde{\textbf{B}})$ are identical if they share the same node and hyperedge sets.
This occurs precisely when $\mathscr{A}$ and $\tilde{\mathscr{A}}$ have the same sparsity pattern, i.e.,
\[
\mathscr{A}_{j_1 j_2 \cdots j_k} = 0 \Longleftrightarrow \tilde{\mathscr{A}}_{j_1 j_2 \cdots j_k} = 0.
\]

\begin{definition}\label{def: structural controllability of polynomials}
An odd-degree homogeneous polynomial control system \eqref{eq: tensor with linear inputs} represented by $(\mathscr{A},\textbf{B})$ is structurally controllable if there exists a {controllable} realization $(\tilde{\mathscr{A}}, \tilde{\textbf{B}})$ whose associated hypergraph $\mathcal{H}(\tilde{\mathscr{A}},\tilde{\textbf{B}})$ is identical to $\mathcal{H}(\mathscr{A},\textbf{B})$.
\end{definition}

Because structurally equivalent pairs $(\mathscr{A},\textbf{B})$ and $(\tilde{\mathscr{A}}, \tilde{\textbf{B}})$ admit identical hypergraph representations, hypergraph-based algorithms can be used to determine whether there exists a realization of the system \eqref{eq: tensor with linear inputs} that satisfies the conditions of \cref{thm: jurdjevic and kupka} and is therefore strongly controllable.

\subsection{Hypergraph Structure for Polynomial Controllability}\label{sec: 3b}
To extend \cref{thm: lin structural controllability} to polynomial control systems via hypergraph representation, we introduce directed hypergraph walks, accessibility, and dilations, extending these concepts from the graph setting (see Fig. \ref{fig:1}).

\begin{figure*}[t]
    \centering
    \includegraphics[width=0.75\linewidth]{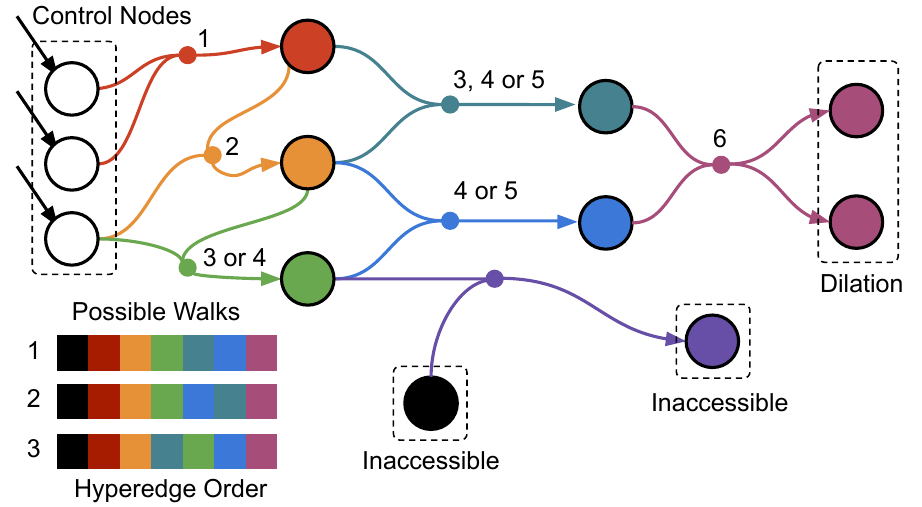}
    \caption{Examples of hypergraph walk, accessibility, and dilation. This hypergraph is not structurally controllable. Hyperedges are uniquely colored, and nodes are shaded to match the color of their incident hyperedge head. Hyperedges are numbered according to the order they may appear in a walk originating at the control node. The black and dark purple nodes are inaccessible, and the light purple nodes form a dilation. The black node is inaccessible as it is not an input node and lacks an incident hyperedge head. Consequently, the dark purple node is also inaccessible; its incident hyperedge head cannot be traversed because the black node is inaccessible. The light purple nodes form a dilation since there is only a single hyperedge head for two nodes.}
    \label{fig:1}
\end{figure*}

\subsubsection{Directed Hypergraph Walks}\label{sec: directed hypergraphs}
\Cref{def: HG of tA Bv} is designed to allow the system \eqref{eq: tensor with linear inputs} to be described in terms of the hyperedges.
Given a $k$-tail uniform hypergraph $\mathcal{H}(\mathscr{A},\textbf{B})$, the dynamics of \eqref{eq: tensor with linear inputs} can be written equivalently as
\begin{equation}\label{eq: hypergraph dynamics incidence perspective} 
    \dot{{x}}_j(t)=\sum_{e\in\mathcal{E}\text{ s.t. }v_j\in e^h}\quad\prod_{i\text{ s.t. }v_i\in e^t}{x}_i(t) +\textbf{b}_{j}\textbf{u}(t).
\end{equation}
This form is advantageous because it expresses the system \eqref{eq: tensor with linear inputs} in terms of the  hyperedge set, enabling its use as a computational graph.
The system \eqref{eq: hypergraph dynamics incidence perspective} allows the propagation of an input or signal on the hypergraph to be expressed as the following notion of a hypergraph walk.
\begin{definition}\label{def: walk} 
    A hypergraph walk starting from the sets of visited nodes $\{\{v_{n+1}\},\{v_{n+2}\},\dots,\{v_{n+m}\}\}$ is a sequence of hyperedges $w=(e_1,e_2,\dots,e_l)$ that satisfies the following conditions: 
    (i) The set of visited sets of nodes is maintained throughout the walk, beginning with         \[\mathcal{W}_0=\big\{\{v_{n+1}\},\{v_{n+2}\},\dots,\{v_{n+m}\}\big\};\] 
    (ii) Prior to the appearance of $e_j$ in the walk sequence, there exist sets 
    $h_1,h_2,\dots,h_p \in \mathcal{W}_{j-1}$ such that
    \[e_j^t \in \mathfrak{A}(h_1,h_2,\dots,h_p),\]
    where $\mathfrak{A}(h_1,h_2,\dots,h_p)$ denotes the set algebra generated by 
    $h_1,h_2,\dots,h_p$ under finite unions, intersections, and set differences;
       (iii) When a new hyperedge $e_j$ is added to the walk sequence $w$, nodes in the head $e_j^h$ are marked as visited nodes, and
       the sets of visited sets of nodes update as \[\mathcal{W}_j=\mathcal{W}_{j-1}\cup e_j^h.\]
    For a walk $w$ of length $l,$ the nodes in $e^h_l$ are the final nodes of the walk.
\end{definition}

This definition of a hypergraph walk is motivated by the need to provide a combinatorial process on a hypergraph that captures the tensor and Kronecker multiplications, which govern the dynamics of the system \eqref{eq: tensor with linear inputs}.
For a standard graph, suppose $\textbf x(0)$ is a vector where exactly one entry is nonzero. If $x_j(0)=1,$ then a walk begins on node $v_j.$ The multiplication $\textbf x(1)=\textbf{A}\textbf x(0)$ produces a vector $\textbf x(1)$ whose nonzero entries indicate the possible next node to include in the walk.
Analogously, for the proposed hypergraph walk, the dynamics \eqref{eq: hypergraph dynamics incidence perspective} implies that if $\textbf x(0)$ is defined as the initial nodes, the multiplication $\textbf x(1)=\mathscr{A}\textbf x(0)^{k-1}$ gives the set of nodes that could be included in the next step of the walk.

Two distinctions arise between \cref{def: walk} and a standard walk on a graph.
First, in the graph setting where a walk visits only one node at a time, the nonzero entries of $\textbf x(1)$ correspond to possible nodes that may be included in the walk, but only one node is included.
By contrast, in the hypergraph setting, the hyperedge heads $e^{h}$ may contain multiple nodes.
When a hyperedge is activated, all nodes in $e^{h}$ are visited simultaneously. As a result, the visited sets of nodes $\mathcal{W}_j$ may expand to multiple nodes in a single step, leading to simultaneous nonzero entries in $\textbf{x}(1)$.
For example, if there exists a hyperedge $\{\{v_1, v_2\}, \{v_3\}\},$ a walk beginning with $\mathcal{W}_0=\{v_3\}$ will visit $v_1$ and $v_2$ together.
We refer to a node $v$ as individually accessible if there exist a set of walks that isolate $v.$
Second, the standard graph multiplication $\textbf x(t+1)=\textbf{A}\textbf x(t)$ is linear, while the hypergraph multiplication $\textbf x(t+1)=\mathscr{A}\textbf x(t)^{k-1}$ is multilinear.
The tensor case is multilinear with respect to a single input argument $\textbf x(t),$ but $\mathscr{A}$ could also act on previous state vectors in the fashion of \[\textbf x(t+1)=\mathscr{A}\textbf x(t)^{k-r-1}\textbf x(t-1)^r\] for $0<r<k-1.$
In this case, the nonzero entries of $\textbf x(t+1)$ are a function of both the hypergraph structure defined by $\mathscr{A}$ and vectors $\textbf x(t)$ and $\textbf x(t-1),$ whose sparsity correspond to nodes visited at the current and previous time points.
This is extensible beyond only the immediately previous time point to include $k-1$ different sets of nodes that have been visited.
For these reasons, a hypergraph walk defined in \cref{def: walk} allows groups of nodes to be visited simultaneously and requires all nodes in $e^t$ to be visited before $e$ is included in the walk.

\subsubsection{Accessibility and Dilations}\label{sec: structural properties}
To extend the original structural controllability of linear systems, we require the following definitions of dilations node accessibility.

\begin{definition}\label{def: hyperedge dilation}
Given a directed  hypergraph $\Hs=\{\Vs,\Es\}$, 
a set of nodes $\Ss\subseteq\Vs$ is a dilation if the number of distinct sets of the form $\Ss\cap e^h$, taken over all hyperedges $e\in\Es$ whose head intersects with $\Ss$, is strictly less than $|\Ss|$.

\end{definition}

\begin{definition}
    Given a directed  hypergraph $\Hs=\{\Vs,\Es\}$ and a set of nodes $\Ws_0=\{\{v_{n+1}\},\{v_{n+2}\},\dots,\{v_{n+m}\}\},$ a node is accessible if it is visited during a walk beginning at $\Ws_0.$
\end{definition}

Both definitions mirror their graph counterparts with slight modifications.
The hypergraph dilation differs from a graph dilation defined by Lin, which uses the size of $\Ss$'s node neighborhood (adjacent nodes) rather than the number of hyperedges directed into $\Ss$.
In a standard graph, where each edge has a single tail, these two quantities coincide, so the distinction becomes meaningful only in the hypergraph setting.
The reason that \cref{def: hyperedge dilation} considers the incident hyperedges rather than adjacent nodes is that the allowance of multiple nodes with each hyperedge increases the number of input signals into $\Ss.$
As an example, suppose that $|\Ss|=5,$ and all hyperedges whose heads included elements of $\Ss$ contained at most 4 nodes in their tails.
This would still allow for $\sum_{j=1}^4\binom{4}{j}=15$ unique pathways to regulate the 5 elements of $\Ss$.
Separately, the hyperedge heads must be unique with respect to the nodes in $\Ss.$
For instance, let $\Ss=\{v_1,v_2\}$ and suppose that the only three hyperedges influencing nodes in $\Ss$ are $\{\{v_1,v_2,v_6\},\{v_3\}\}$, $\{\{v_1,v_2,v_7\},\{v_4\}\}$, and $\{\{v_1,v_2,v_8\},\{v_5\}\}.$ 
Then a hyperedge dilation exists, since the set of hyperedge heads unique with respect to nodes in $\Ss$ is one.
Equivalently, any information passed to $v_1$ is also passed to $v_2$ and vice versa.

\subsection{Structural Controllability of Polynomial Systems}\label{sec: 3c}
We now utilize hypergraph structure to determine conditions under which $(\mathscr A,\textbf B)$  possesses the sparsity required to satisfy the strong controllability criterion of \cref{thm: cohg}.
We first consider the sparsity of a matrix $\textbf W$ generated from a hypergraph walk originating at the control nodes.
Beginning with control nodes $\Ws_0=\{\{v_{n+1}\},\{v_{n+2}\},\dots,\{v_{n+m}\}\}$ allows the first step of the walk to progress from any control nodes to any state nodes that are influenced by the columns of $\textbf{B}.$ 
The second step of this walk may then visit any nodes corresponding to nonzero entries in $\textbf{w}$ defined as
\begin{equation*}
    \textbf{w}=\mathscr A\times_2\textbf{b}_{j_1}\times_3\textbf{b}_{j_2}\times_4\dots\times_{k}\textbf{b}_{j_{k-1}},
\end{equation*}
where $j_1,j_2,\dots,j_{k-1}$ denote the column indices of $\textbf B.$
Note that the possible sparsity of $\textbf w$  is equivalent to the possible sparsity of $\textbf M_1$ in \cref{thm: cohg}.
This tensor notation can be rewritten in terms of the Kronecker product as
\begin{equation*}
    \textbf W_1=\textbf A_{(1)}\textbf B^{[k-1]},
\end{equation*}
where the columns of $\textbf W_1$ denote possible next sets of nodes for the walk.
According to the definition of the walk, the matrix $\textbf W_2$ is constructed as
\begin{equation*}
    \textbf W_2=\textbf A_{(1)}\begin{bmatrix}
        \textbf B&\textbf W_1
    \end{bmatrix}^{[k-1]},
\end{equation*}
 where the sparsity structure of columns of $\textbf W_2$ denote the set of nodes accessible after this next step.
We continue this pattern and denote  $\textbf W_j$ as the matrix where the sparsity structure of each column indicates a set of nodes accessible at the $j$th step.
Note that the columns sparsity of $\textbf W_j$ is equivalent to the columns sparsity of $\textbf M_j,$ so that the matrix
\begin{equation*}
    \textbf W=\begin{bmatrix}
    \textbf B&\textbf W_1&\textbf W_2&\cdots&\textbf W_{n}
\end{bmatrix}
\end{equation*}
has an identical sparsity to the nonlinear controllability matrix \textbf{C}.
If entries of $\textbf W$ are selected to be full rank, then according to \cref{thm: At Bv is open and dense}, there exist a strongly controllable pair $(\tilde{\mathscr A},\tilde{\textbf B})$ with equivalent sparsity as $(\mathscr A,\textbf B)$, and we call the system structurally controllable.

\begin{proposition}\label{thm: structural control of polynomials}
   The system $(\mathscr A,\textbf B)$ is structurally controllable if and only if the hypergraph $\Hs(\mathscr A,\textbf B)$ contains neither a hyperedge dilation nor an inaccessible node.
\end{proposition}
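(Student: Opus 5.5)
The plan is to run the argument through the sparsity surrogate $\textbf W=[\textbf B\ \textbf W_1\ \cdots\ \textbf W_n]$ built above. Its support pattern coincides with that of the nonlinear controllability matrix $\textbf C$ of \cref{thm: cohg}. By \cref{thm: At Bv is open and dense} and its algebraic-independence argument, $(\mathscr A,\textbf B)$ is structurally controllable exactly when the generic rank of $\textbf C$ over the pattern of $\Hs(\mathscr A,\textbf B)$ equals $n$. Choose the nonzero entries of $\tilde{\mathscr A},\tilde{\textbf B}$ algebraically independent over $\mathbb{Q}$. Then every minor of $\tilde{\textbf C}$ is a polynomial in these entries, and it vanishes only if it vanishes identically. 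So the proposition reduces to a purely combinatorial statement: the generic rank of $\textbf C$ is $n$ if and only if every state node is accessible and no set $\Ss$ is a hyperedge dilation. Each direction is proved separately.

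For necessity, argue by contraposition in two cases. First, suppose a state node $v_i$ is inaccessible. By induction on $j$, every column of $\textbf M_j$ has zero $i$-th entry. The base case holds because $\textbf M_0=\textbf B$ and $v_i$ has no incoming control hyperedge. For the inductive step, a nonzero entry of $\mathscr A\textbf v_1\cdots\textbf v_{k-1}$ in row $i$ requires a hyperedge with $v_i$ in its head whose tail nodes all lie in the supports of earlier columns. This is exactly condition (ii) of \cref{def: walk}, since span combinations realize the set algebra $\mathfrak A$. Hence row $i$ of $\textbf C$ vanishes identically and $\rank\textbf C<n$. Second, suppose $\Ss$ is a dilation. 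Restrict $\textbf C$ to the rows indexed by $\Ss$. Every column restricted to $\Ss$ is a linear combination of the indicator patterns of the sets $\Ss\cap e^h$, each scaled by a product of tail states, because each hyperedge contributes its whole head simultaneously. So the row space restricted to $\Ss$ has dimension at most the number of distinct sets $\Ss\cap e^h$, which is less than $|\Ss|$. This rank deficiency persists for every parameter choice, so $\rank\textbf C<n$.

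For sufficiency, assume no dilation and full accessibility, and exhibit $n$ columns of $\textbf W$ whose $n\times n$ minor is a nonzero polynomial. The first step is to apply the no-dilation condition as a Hall-type condition to the bipartite incidence between state nodes and distinct head-restrictions $\Ss\cap e^h$. This yields a system of distinct representatives, i.e., a matching assigning each state node $v_i$ a hyperedge $e_{(i)}$ whose head contains $v_i$, with the head traces pairwise distinguishable. The second step uses accessibility to order these hyperedges so that each tail $e_{(i)}^t$ lies in the set algebra generated by heads already visited. Each $e_{(i)}$ then generates a column of some $\textbf W_j$ whose support contains $v_i$, and its coefficient is a monomial $\mathscr A_{i\,t_1\cdots t_{k-1}}\cdot(\text{earlier coefficients})$. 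The third step is to show that the determinant of the selected $n\times n$ submatrix contains the product of these "diagonal" monomials as a term that no other permutation produces, so it cannot cancel. With independent entries the minor is then nonzero.

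The main obstacle is this last non-cancellation step together with the ordering. The matching from the Hall condition and the walk order from accessibility must be made compatible, and repeated tail nodes or heads with multiple nodes can make distinct permutations yield identical monomials. My first attempt would be a degree/lexicographic argument, in the spirit of Lin's cactus construction. I would track each column's coefficient as a monomial tagged by the specific hyperedges used, and show that the matched assignment gives the unique monomial of maximal multidegree in the matched coefficients. Failing that, I would perturb by choosing the matched coefficients much larger than all others, so that the diagonal term dominates the determinant.
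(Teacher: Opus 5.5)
The step you flag as the main obstacle cannot be closed by a lexicographic or dominant-coefficient argument. With the dilation notion as defined, the implication it is supposed to deliver is false. Your Hall step matches each state node to a distinct hyperedge (a distinct head trace). A nonzero $n\times n$ minor of $\mathbf C$, however, needs the nodes matched to distinct \emph{columns}. Columns are indexed by tails (the arguments fed to $\mathscr A$), not by heads, so single-head hyperedges that share a tail land in the same column. Take $k=4$, one input, and
\[
\dot x_1=b_1u,\qquad \dot x_2=a\,x_1^3,\qquad \dot x_3=b\,x_1^3 .
\]
Every state node is accessible. Each of $v_1,v_2,v_3$ is the head of its own single-head hyperedge, so for every set $\Ss$ of state nodes the traces $\Ss\cap e^h$ are $|\Ss|$ distinct singletons, and there is no dilation. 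Yet $\mathscr A\mathbf v_1\mathbf v_2\mathbf v_3=(\mathbf v_1)_1(\mathbf v_2)_1(\mathbf v_3)_1\,(0,a,b)^\top$ for all arguments. Hence $\rank\mathbf C=2$ for every realization; indeed $b x_2-a x_3$ is conserved. This is Lin's textbook dilation $u\to x_1\to\{x_2,x_3\}$ lifted to cubic degree; with $k=2$ it is literally Lin's example. So under the definition of $\Hs(\At,\Bv)$ the ``if'' direction fails. No ordering, monomial tagging, or coefficient scaling can produce your diagonal term, because the hyperedges matched to $v_2$ and $v_3$ occupy a single column.

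Merging same-tail entries into multi-node heads, which the paper allows, fixes this example but breaks your necessity argument. You claim each column restricted to $\Ss$ is a combination of \emph{indicator} vectors of the traces $\Ss\cap e^h$. That requires all head nodes of a hyperedge to share one coefficient, whereas the entries $\mathscr A_{h t_1\cdots t_{k-1}}$ for different $h\in e^h$ are independent parameters. Take $\dot x_1=u_1$, $\dot x_4=u_2$, $\dot x_2=a x_1^3+c x_4^3$, $\dot x_3=b x_1^3+d x_4^3$. The set $\{v_2,v_3\}$ sees a single trace, so it is a dilation, yet $\mathbf C$ contains $\mathbf e_1,\mathbf e_4,(0,a,b,0)^\top,(0,c,d,0)^\top$ and has rank $4$ generically. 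With single heads your argument is valid but vacuous: a dilation then only means a state node with no incoming hyperedge, which is already an inaccessible node.

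The paper's proof makes the same identification of equal traces with equal columns (``at most $d-1$ unique column vectors in $\mathbf D$''). Its contrapositive case analysis on $\rank\mathbf R$ and $\rank\mathbf W_1$ does not supply the missing ingredient either. What actually bounds the rank of the $\Ss$-rows of $\mathbf C$ is the number of distinct tails (monomials and inputs) feeding $\Ss$, which is the true analogue of Lin's in-neighbour count. A sound version of your Hall step would have to match nodes to distinct tails, and that is a different hypothesis from the one in the statement.
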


\begin{proof}
    We first show the sufficiency.
    If $(\mathscr A,\textbf B)$ contains a nonaccessible node $v$, then there exists no walk beginning at the control nodes that will reach node $v$.
    This indicates there will be a row of all zeros in the matrix $\textbf W_j$ for all possible steps $j$ in the walk.
    There will then be a row of all zeros in $\textbf W.$
    Since $\textbf W$ has the same sparsity structure as the nonlinear controllability matrix, then for any pair $(\tilde{\mathscr A},\tilde{\textbf B})$ the nonlinear controllability matrix will have a row of all zeros and be low rank.
    Therefore, the pair $(\mathscr A,\textbf B)$ is not structurally controllable.
If $\Hs(\mathscr A,\textbf B)$ contains a dilation, the augmented matrix $\textbf R=\begin{bmatrix}
        \tilde{\textbf A}_{(1)}&\tilde{\textbf B}
    \end{bmatrix}$ may be partitioned as
    \begin{equation*}
        \textbf R=\begin{bmatrix}
            \tilde{\textbf A}_{(1)}&\tilde{\textbf B}    \end{bmatrix}=\begin{bmatrix}
            \textbf D_\mathscr A&\textbf D_{\textbf B}\\ \textbf N_\mathscr A&\textbf N_\textbf B
        \end{bmatrix},
    \end{equation*}
    where $\textbf D=\begin{bmatrix}
        \textbf D_\mathscr A&\textbf D_\textbf B
    \end{bmatrix}$ is a $d\times (n^{k-1}+m)$ matrix whose rows correspond to nodes in the dilation set $\Ss$, and $\textbf N=\begin{bmatrix}
        \textbf N_\mathscr A&\textbf N_\textbf B
    \end{bmatrix}$ is a $(n-d)\times (n^{k-1}+m)$ matrix whose rows correspond to nodes not found within the dilation.
    Since $\Ss$ is a dilated set of nodes, there exists $d-1$ or fewer hyperedges whose heads are unique with respect to the elements in $\Ss.$
    As the rows of $\textbf D$ are nodes and the columns are hyperedges, there exists at most $d-1$ unique column vectors in $\textbf D,$ so $\rank(\textbf D)<d.$
    The matrix $\textbf W$ has the sparsity structure of
    \begin{equation*}
    \begin{split}
        \textbf W&=\begin{bmatrix}
            \textbf D_\textbf B 
        & \textbf D_\mathscr A\textbf W_0^{[k-1]}&\cdots&\textbf D_\mathscr A\textbf W_{n-1}^{[k-1]}\\
            &&\\
            \textbf N_\textbf B & \textbf N_\mathscr A \textbf W_0^{[k-1]}&\cdots&\textbf N_\mathscr A\textbf W_{n-1}^{[k-1]}\\
        \end{bmatrix}.
    \end{split}
    \end{equation*}
    The rank of $\textbf W$ is bound by the relationship
    \begin{equation*}
    \rank(\begin{bmatrix}
            \textbf D_\mathscr A\!&\!\textbf D_\textbf B
    \end{bmatrix})\!\geq\!    \rank(\begin{bmatrix}\textbf D_\textbf B \!&\! \textbf D_\mathscr A\textbf W_0^{[k-1]} \!&\!\cdots\!&\!\textbf D_\mathscr A\textbf W_{n-1}^{[k-1]}\end{bmatrix}).
    \end{equation*}
    Hence, the upper block of $\textbf W$ has a rank less than $d.$ As a result, $\rank(\textbf W)<n,$ and the nonlinear controllability matrix will also be low rank since it has an identical sparsity structure. Therefore, the pair $(\mathscr A,\textbf B)$ is not structurally controllable.

    We have shown the presence of either a dilation or inaccessible node is sufficient to conclude that the system is not structurally controllable. We now show the converse direction. If $(\mathscr A,\textbf B)$ is not structurally controllable, then $\rank(\textbf C) <n.$
    The nonlinear controllability matrix may be low rank when the augmented matrix $\textbf R=\begin{bmatrix}
        \tilde{\textbf A}_{(1)}&\tilde{\textbf B}
    \end{bmatrix}$ has either $\rank(\textbf R)=n$ or $\rank(\textbf R)<n.$
    In the case where the augmented matrix $\textbf R$ has a rank less than $n,$ there must exist at least one row of $\textbf R$ that has all zero entries, or $n^{k-1}+m-n+1$ columns that has all zero entries. To satisfy the condition that $\rank(\textbf R)<n$, these rows and columns contain only zero entries, rather than having a nontrivial linear dependence, because if there was a nontrivial linear dependence and there were some nonzero entries, values could be chosen in a fashion consistent with the proof provided by Lee and Markus to increase the rank of $\textbf R.$ Given that these entries are fixed as zeros, the following structures occur in $\Hs(\mathscr A,\textbf B).$ A row of all zeros corresponds to a node that receives no inputs. As a result, the node corresponding to this row is not contained in any hyperedge heads, so there is no walk from the control nodes to this node. Therefore, a nonaccessible node is found. Moreover, at least $n^{k-1}+m-n+1$ columns containing all zeros indicates at most $n-1$ hyperedges in $\Hs(\mathscr A,\textbf B).$ Then, the full set of $n$ nodes of $\Hs(\mathscr A,\textbf B)$ constitutes a dilation since $|\Vs|>|\Es|.$
    It is possible that both a nonaccessible node and dilation exist so that $\rank(\textbf R)<n.$

    In the case where the augmented matrix $\Rv$ has rank $n$ and $\rank(\textbf C)<n$, it will be the case that $\rank(\tilde{\textbf A}_{(1)}\tilde{\textbf B}^{[k-1]})<n,$ since this matrix is a submatrix of the nonlinear controllability matrix.
    Although $\tilde{\textbf A}_{(1)}$ itself may have rank $n$, its image is constrained when acting on inputs from the column space of $\tilde{\textbf B}^{[k-1]}.$ This may occur for two reasons. First, at least one row of $\tilde{\textbf A}_{(1)}\tilde{\textbf B}^{[k-1]}$ may be all zeros. In this case, there are nodes that are nonaccessible on $\Hs(\At,\Bv)$ from a length 1 walk beginning at the control nodes. Second, at least two rows of $\tilde{\textbf A}_{(1)}\tilde{\textbf B}^{[k-1]}$ have a nontrivial linear dependence. If this is the case, there exists a permutation of the coordinates, such that the matrix $\tilde{\textbf A}_{(1)}\tilde{\textbf B}^{[k-1]}$ may be written
        \begin{equation*}
            \tilde{\textbf A}_{(1)}\tilde{\textbf B}^{[k-1]}=\begin{bmatrix}
                \textbf D\\
                \textbf N
            \end{bmatrix},
        \end{equation*}
        where $\textbf D$ denotes the rows that have a nontrivial linear dependence, and $\Nv$ denotes the rows that are linearly independent. Here, $\textbf D$ is a $d\times m^{k}$ matrix, and $\textbf N$ is a $(n-d)\times m^{k}$ matrix, where $\rank(\textbf N)=n-d,$ and $\rank(\textbf D)<d.$ It is possible that $\textbf D$ contains a row of all zeros, in which case a nonaccessible node exist.
        Instead, if all the rows of $\textbf D$ contain a nonzero entry, there must exist at most $d-1$ linearly independent columns of $\textbf D$ to ensure that $\rank(\textbf D)<d.$
        Given that $\textbf D$ is constructed from matrices $(\tilde{\textbf A}_{(1)}, \tilde{\textbf B})$ where the structure is defined but the nonzero entries may take any value, the only way that at most $d-1$ linearly dependent columns are in $\textbf D$ is for $\textbf D$ to contain exactly $d-1$ unique columns, where the columns are unique with respect to which hyperedge tail nodes they are influenced by. As an example, in the tensor of a hypergraph containing a single hyperedge $\{\{v_1\},\{v_2,v_3\}\},$ the tensor structure indicates that the entries $\mathscr A_{123}$ and $\At_{132}$ correspond to the same hyperedge. When unfolded to the $3\times 9$ matrix $\textbf A_{(1)},$ columns 6 and 8 both describe the effects of this hyperedge.

        As the columns of $\textbf D$ indicate nodes contained in hyperedge heads, the set of $d$ nodes whose rows are described by $\textbf D$ has fewer than $d$ hyperedge heads directed at these nodes, so a dilation exists. Recall that $\textbf W_1=\tilde{\textbf A}_{(1)}\tilde{\textbf B}^{[k-1]}$, and it denotes the accessibility of nodes after a one step walk from the control nodes.
    From a walk starting at the sets of nodes indicated by the columns of $\textbf W_j,$ where $\textbf W_0=\Bv,$ the above process illustrates that ensuring $\rank(\textbf W_{j+1})<n$ requires the existence of a dilation or a nonaccessible node.
    Since a nonaccessible node from a walk beginning at $\textbf W_j$ is also nonaccessible from a walk beginning at $\textbf B,$ each step in the walk requires the existence of a dilation of a nonaccessible node.
    Thus, if the system is not structurally controllable then either a dilation or inaccessible node exists.
\end{proof}

In the proof, sufficiency follows directly, while necessity is established via the contrapositive. Consequently, \cref{thm: structural control of polynomials} implies that verifying the accessibility of all nodes and the absence of hyperedge dilations in the associated hypergraph provides a complete combinatorial test for structural controllability. This directly generalizes the classical structural controllability of linear systems on graphs, where controllability is determined by node accessibility and the absence of dilations. By formulating the problem in terms of hypergraph properties, the framework enables scalable verification for large-scale polynomial systems, facilitates the identification of critical nodes or hyperedges, and guides the design of input configurations that ensure controllability even under uncertainty in system parameters.

\subsection{Identification of Minimum Number of Driver Nodes}\label{sec: 3d}

Dilation detection algorithms can be used to efficiently compute lower bounds on the number of state variables that must receive control inputs in order to render a polynomial system controllable based on \cref{thm: structural control of polynomials}. By identifying subsets of nodes whose incoming hyperedges are insufficient to independently influence all states in the subset, these algorithms reveal structural limitations that must be resolved through additional control inputs. Consequently, the detected dilations indicate where additional driver nodes must be introduced to eliminate these structural bottlenecks and ensure that all states can be independently influenced.

\subsubsection{Hypergraph Matching-Based Lower Bound}
In the linear setting, Liu et al. employed minimal input theorem and maximum matching to identify the set of driver nodes \cite{liu2011controllability}. Graph matching algorithms are scalable and well suited for detecting dilations in both graphs and hypergraphs. To extend dilation detection to hypergraphs, we employ the star expansion, which converts a hypergraph into a bipartite graph whose node sets correspond to the nodes and hyperedges of the original hypergraph. For a standard hypergraph $\Hs(\Vs, \Es)$, the star expanded graph is defined as $\Gs(\Vs\cup\Es,\Es')$ where $\Vs\cup\Es$ is the augmented node set and the edge set is
\[
\Es' = \{ (v,e) \in \Vs \times \Es \mid v \in e, v\in \Vs, \text{ and }e\in\Es \}.
\]
A node $v \in \Vs$ is adjacent to a hyperedge node $e \in \Es$ in $\Gs$ if and only if $v$ is contained in the hyperedge $e$ in $\Hs$. We extend this definition to directed hypergraphs.

\begin{definition}
Given a hypergraph $\Hs=\{\Vs, \Es\}$ with node set $\Vs$ and hyperedge set $\Es$, the star expansion of $\Hs$, denoted by $\Gs = \mathrm{star}(\Hs)$, is the bipartite graph
    $\Gs = \{\Vs \cup \Es, \Es'\},$
where the edge set $\Es'$ is defined as
\begin{equation*}
\begin{split}
    \Es' = & \{ (v,e) \in \Vs \times \Es \mid v \in e^t, v\in \Vs, \text{ and }e\in\Es \} \cup\\
    & \{ (e,v) \in \Vs \times \Es \mid v \in e^h, v\in \Vs, \text{ and }e\in\Es \}.
\end{split}
\end{equation*}
\text{ }
\end{definition}

In the star expansion of a directed hypergraph, directed edges are assigned depending on whether each node $v$ belongs to the head or the tail of the hyperedge node $e$.
As an example, see Fig. \ref{fig:1}, where the state nodes $\Vs$ are large nodes and the nodes representing hyperedges are seen as smaller nodes. A matching in a directed graph is a set of edges such that no two selected edges share a common node. A node is said to be covered if it is the endpoint of an edge in the matching. The size of a matching is the number of covered nodes.
Based on \cref{def: hyperedge dilation}, a dilation exists in a hypergraph $\Hs=\{\Vs, \Es\}$ if there is no matching in the star-expanded graph $\Gs$ from hyperedge nodes to node nodes (i.e., $\Es \to \Vs$) that covers every node in $\Vs$. The presence of such a dilation implies that not all state variables can be independently influenced by the system dynamics.
This observation leads directly to a lower bound on the number of driver nodes required for controllability.

\begin{proposition}\label{thm: lower bound}
Given an uncontrolled polynomial system \eqref{eq: tensor with linear inputs}, its associated hypergraph representation is $\Hs$, and its star expansion is $\Gs=\text{star}(\Hs).$ If $\Gs$ admits a maximum matching of size $m$, then the number of uncovered nodes in $\Vs$, given by $|\Vs| - m$, provides a lower bound on the number of state variables that must be individually actuated in order to render the system controllable.
\end{proposition}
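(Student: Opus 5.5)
The plan is to reduce the claim to the dilation condition of \cref{thm: structural control of polynomials}, via a Hall/K\"onig-type argument on the star expansion $\Gs$. The matching that matters is the one directed from hyperedge nodes to state nodes ($\Es\to\Vs$), with each hyperedge node matched to at most one state node in its head. I read $m$ as the number of state nodes covered by a maximum such matching.

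First, I would show that adding actuators one state at a time increases the maximum matching size by at most one per actuated state. Individually actuating a state $v_j$ means adding a control node $v_{n+\ell}$ and a control hyperedge $(\{v_j\},\{v_{n+\ell}\})$. In $\Gs$ this adds a single new hyperedge node, and its only head neighbour is $v_j$. Any matching in the augmented star expansion therefore restricts to a matching of the original $\Gs$ once we delete the at most one edge incident to the new hyperedge node. Adding $r$ actuated states thus raises the maximum matching size to at most $m+r$.

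Second, I would show that controllability forces a matching covering all of $\Vs$. Suppose the augmented system is structurally controllable, which is necessary for any realization to be controllable. By \cref{thm: structural control of polynomials} its hypergraph contains no hyperedge dilation. Then, for every $\Ss\subseteq\Vs$, the number of distinct traces $\Ss\cap e^h$ is at least $|\Ss|$.

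I would convert this into Hall's condition for the bipartite graph between $\Vs$ and the hyperedge nodes. The set of hyperedges whose heads meet $\Ss$ has cardinality at least the number of distinct traces, hence at least $|\Ss|$. Hall's theorem then yields a matching from hyperedge nodes into $\Vs$ that saturates $\Vs$, of size $|\Vs|$. Combining this with the first step gives $|\Vs|\le m+r$, so $r\ge|\Vs|-m$.

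The main obstacle is the passage from the paper's dilation definition, which counts distinct sets $\Ss\cap e^h$, to Hall's condition, which counts neighbouring hyperedge nodes. The direction needed here is the easy one: distinct traces require distinct hyperedges, so the number of neighbours is at least the number of traces. I would check this carefully, including the multiset tails allowed in \cref{def: HG of tA Bv}. I would also check the convention that a matching's size counts covered state nodes, making sure the uncovered count $|\Vs|-m$ matches the paper's definition of size. Should $\Vs$ in the statement include control nodes, I would restrict attention to state nodes, since only these must be covered for the absence of dilations.
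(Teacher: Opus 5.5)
Your proposal is correct and follows the same route as the paper: use \cref{thm: structural control of polynomials} to reduce controllability of the actuated system to the absence of hyperedge dilations, then read the number of required actuators off a maximum $\Es\to\Vs$ matching in the star expansion. It is more careful than the paper's proof, which simply asserts that each uncovered node must be actuated (taken literally, that depends on which maximum matching is chosen); your two additions make the count $r\ge|\Vs|-m$ rigorous, namely that each new control hyperedge raises the matching number by at most one, and that Hall's theorem is used only in the valid direction (no dilation $\Rightarrow$ Hall's condition).
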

\begin{proof}
By construction, hyperedge dilations in $\Hs(\At)$ correspond to the absence of a matching in $\Gs$ from hyperedge nodes to node nodes that covers all nodes in $\Vs$.
Let $s$ be the size of a maximum matching in $\Gs$. Then exactly $|\Vs| - s$ nodes in $\Vs$ remain uncovered by any such matching. Each uncovered node corresponds to a state variable that cannot be matched to an influencing hyperedge and participates in a hyperedge dilation.
To eliminate all dilations, and thereby satisfy the conditions for structural controllability of \cref{thm: structural control of polynomials}, each uncovered node must be directly actuated by an independent control input. Therefore, at least $|\Vs| - s$ state variables must receive control inputs.
\end{proof}

The lower bound in \cref{thm: lower bound} follows from the necessity of eliminating hyperedge dilations for structural controllability. However, \cref{thm: structural control of polynomials} additionally requires the absence of inaccessible nodes. Since the matching-based procedure on the star expansion detects only hyperedge dilations and does not account for accessibility, it does not in general identify a sufficient set of driver nodes. Consequently, \cref{thm: lower bound} provides only a lower bound on the number of state variables that must be individually actuated.
This result offers a principled and computationally efficient starting point for driver node selection. Maximum matching on the star-expanded graph can be computed using scalable algorithms, making the bound practical for large-scale polynomial systems and a foundation for the driver selection strategies.

\begin{algorithm}[t]
\caption{Matching-Augmented Greedy (MaG) Driver Node Selection}
\label{alg:MaG}
\begin{algorithmic}[1]
\Require Homogeneous polynomial system with associated hypergraph representation $\Hs$
\Ensure Driver node set $\Ds$ satisfying structural controllability

\State Build the star expansion $\Gs = \mathrm{star}(\Hs)$
\State Compute a maximum matching $\mathcal{M}$ in $\Gs$
\State Identify unmatched nodes:
\[
\Ds_{\mathrm{dil}} = \{ v \in \Vs \mid v \text{ is uncovered by } \mathcal{M} \}
\]

\State Initialize driver set $\Ds \gets \Ds_{\mathrm{dil}}$
\State Compute accessible nodes $\As \gets \mathrm{WalkReach}(\Hs, \Ds)$

\While{$\As \neq \Vs$}
    \State Select $v \in \Vs \setminus \As$ that maximizes
    \[
    |\mathrm{WalkReach}(\Hs, \Ds \cup \{v\})|
    \]
    \State $\Ds \gets \Ds \cup \{v\}$
    \State Update accessible nodes $\As \gets \mathrm{WalkReach}(\Hs, \Ds)$
\EndWhile

\State \textbf{return} $\Ds$
\State

\Procedure{WalkReach}{$\Hs, \Ds$}
    \State Initialize accessible nodes $\As \gets \Ds$
    \State Initialize hyperedge queue $\Qs$ with hyperedges reachable from $\As$
    \While{$\Qs \neq \emptyset$}
        \State Pop hyperedge $e$ from $\Qs$
        \State Add head nodes to accessible set: $\As \gets \As \cup e^h$
        \State Update $\Qs$ with hyperedges now reachable from $\As$
    \EndWhile
    \State \Return $\As$
\EndProcedure

\end{algorithmic}
\end{algorithm}

\begin{table*}
\caption{Driver node selection for small-scale hypergraphs (mean $\pm$ std with 5 trials).}
\label{tab: exp 1}
\small
\centering
\setlength{\tabcolsep}{15pt}
\renewcommand{\arraystretch}{1.2}
\begin{tabular}{ll|ccc|c}
\toprule
$\alpha$ & $n$ & Matching & MaG & Greedy & Optimal \\
\midrule
\multicolumn{6}{c}{$k=6$}\\
\midrule
\multirow[c]{3}{*}{0.5} & 10 & 5.200 $\pm$ 0.447 & 7.200 $\pm$ 0.837 & 9.667 $\pm$ 0.577 & 6.800 $\pm$ 0.837 \\
 & 15 & 8.200 $\pm$ 0.447 & 10.200 $\pm$ 1.304 & 11.667 $\pm$ 0.577 & 9.200 $\pm$ 0.837 \\
 & 20 & 10.800 $\pm$ 0.837 & 13.200 $\pm$ 1.304 & 16.333 $\pm$ 1.528 & 12.400 $\pm$ 0.548 \\
\midrule
\multirow[c]{3}{*}{1.0} & 10 & 1.400 $\pm$ 0.894 & 7.400 $\pm$ 0.894 & 7.500 $\pm$ 0.577 & 6.000 $\pm$ 0.000 \\
 & 15 & 1.600 $\pm$ 0.548 & 10.000 $\pm$ 1.000 & 9.400 $\pm$ 0.548 & 7.000 $\pm$ 0.707 \\
 & 20 & 2.800 $\pm$ 1.304 & 11.800 $\pm$ 1.095 & 11.750 $\pm$ 1.500 & 8.800 $\pm$ 0.837 \\
\midrule
\multicolumn{6}{c}{$k=8$}\\
\midrule
\multirow[c]{3}{*}{0.5} & 10 & 5.600 $\pm$ 0.548 & 8.000 $\pm$ 0.000 & 8.200 $\pm$ 0.447 & 8.000 $\pm$ 0.000 \\
 & 15 & 8.400 $\pm$ 0.894 & 11.600 $\pm$ 1.140 & 13.000 $\pm$ 0.816 & 10.600 $\pm$ 0.894 \\
 & 20 & 10.600 $\pm$ 0.894 & 15.400 $\pm$ 0.548 & 17.400 $\pm$ 0.548 & 12.800 $\pm$ 1.483 \\
\midrule
\multirow[c]{3}{*}{1.0} & 10 & 2.400 $\pm$ 0.894 & 8.000 $\pm$ 0.000 & 9.000 $\pm$ 0.707 & 8.000 $\pm$ 0.000 \\
 & 15 & 2.000 $\pm$ 1.000 & 11.800 $\pm$ 1.304 & 11.600 $\pm$ 1.140 & 8.600 $\pm$ 0.548 \\
 & 20 & 3.200 $\pm$ 0.837 & 14.800 $\pm$ 1.643 & 15.600 $\pm$ 1.140 & 10.800 $\pm$ 0.447 \\
\bottomrule
\end{tabular}
\end{table*}

\subsubsection{Matching-Augmented Driver Node Selection}

The lower bound of \cref{thm: lower bound} motivates a scalable approach for driver node selection for hypergraph dynamics.
While the star-expansion matching detects only hyperedge dilations and does not account for accessibility, it identifies a minimal set of state variables that must be directly actuated.
These variables form a core set of driver nodes, which can then be augmented to ensure structural controllability.
Specifically, the proposed procedure constructs the hypergraph associated with a given homogeneous polynomial system, computes its star expansion, and determines a maximum matching. The unmatched nodes in this matching constitute the set
\[
\Ds_{\mathrm{dil}} = \{ v \in \Vs \mid v \text{ is uncovered by the maximum matching} \},
\]
which corresponds to the minimal set of nodes that eliminate all hyperedge dilations.
Determining $\Ds_{\mathrm{dil}}$ can be performed efficiently using classical bipartite matching algorithms, such as the Hopcroft–Karp algorithm and the Hungarian method \cite{hopcroft1973n, kuhn1955hungarian}.
Assigning independent control inputs to these nodes satisfies the lower bound of \cref{thm: lower bound}.

To achieve complete structural controllability, additional driver nodes may be required to ensure that all nodes are accessible from the selected drivers. Accessibility is defined in terms of directed hypergraph walks: a hyperedge can be traversed only after all nodes in its tail have been visited, and traversal of a hyperedge simultaneously activates all nodes in its head. This induces a forward-propagation process on the hypergraph, generalizing breadth-first search from graphs to directed hypergraphs (see \cref{alg:MaG}).
Starting from the initial set \(\Ds_{\mathrm{dil}}\), we compute the set of nodes reachable via hypergraph walks. Any remaining inaccessible nodes are then added iteratively using a greedy strategy: at each iteration, the node whose addition maximally increases the set of accessible nodes is selected as a driver node. This process continues until every node is reachable from the driver set. The resulting driver node set
\[
\Ds = \Ds_{\mathrm{dil}} \cup \Ds_{\mathrm{acc}}
\]
satisfies both the dilation-free and accessibility requirements of \cref{thm: structural control of polynomials}, guaranteeing structural controllability.

Previous work on hypergraph controllability employed a greedy selection heuristic that required \(\Os(n)\) computations of the singular value decomposition and Kronecker exponentiation per iteration \cite{chen2021controllability}. In contrast, our greedy selection leverages hypergraph walks with sparse representations, low memory usage, and intermediate results that can be cached between iterations, making it significantly more scalable while preserving the correctness guarantees provided by the graph-theoretic interpretation.
This matching-based identification of $\Ds_{\mathrm{dil}}$ followed by greedy accessibility completion yields a principled, scalable algorithm for driver node selection. Maximum matching can be computed efficiently, and the accessibility verification and augmentation admit efficient hypergraph-based implementations. Since the resulting procedure combines the lower-bound guarantee of maximum matching with a scalable, hypergraph-based greedy augmentation, we refer to it as matching-augmented greedy (MaG), which will be employed in subsequent numerical experiments to select driver nodes in large-scale hypergraph systems.

\section{Numerical Experiments}
\label{sec:numerical_experiments}

We present numerical experiments illustrating the application of structural controllability to large-scale systems.
Three classes of experiments are considered: (i) benchmarking the MaG approach on small-scale hypergraphs against alternative driver node selection methods; (ii) evaluating the scalability of MaG for high-dimensional hypergraphs; and (iii) analyzing driver node placement in systems with structured hypergraph topologies.  
All experiments were conducted on an AMD EPYC 7763 64-Core processor using a Python implementation of the Hypergraph Analysis Toolbox \cite{pickard2023hat}. The code for  these experiments
is available at \url{https://github.com/Jpickard1/hypergraph-structural-control}.

\subsection{Small-Scale Hypergraphs}
We first evaluate the performance of MaG on small-scale hypergraphs by comparing it with three alternative driver node selection methods:
(i) Optimal: enumerates all driver node combinations to identify the minimum set that satisfies structural controllability;
(ii) Maximum matching: selects driver nodes based solely on hypergraph matching;
(iii) Greedy selection: sequentially adds nodes to maximize the number of accessible nodes.
Due to computational complexity of a brute force search (BFS), we consider hypergraphs with $n \leq 20$ and $k \leq 8$, where $m$ is the total number of hyperedges. Random hypergraphs were sampled with densities $\alpha = m/n \in \{0.5, 1.0\}$. Summary results are provided in \cref{tab: exp 1}, and Fig. \ref{fig: exp 1} illustrates the representative case $k=4$. Consistent with \cref{thm: lower bound}, maximum matching alone fails to select a sufficient set of driver nodes to render the system structurally controllable. In contrast, greedy selection tends to choose the largest number of driver nodes. To quantify performance, we measure the optimality gap, defined as the difference between the number of driver nodes selected and the BFS optimum. Across all scenarios, MaG consistently achieves near-optimal performance while guaranteeing structural controllability. Although BFS becomes computationally prohibitive for larger systems, MaG remains computationally efficient and ensures that the selected driver nodes satisfy the structural controllability condition established in \cref{thm: structural control of polynomials}.

\begin{figure}[t!]
    \centering
    \includegraphics[width=\linewidth]{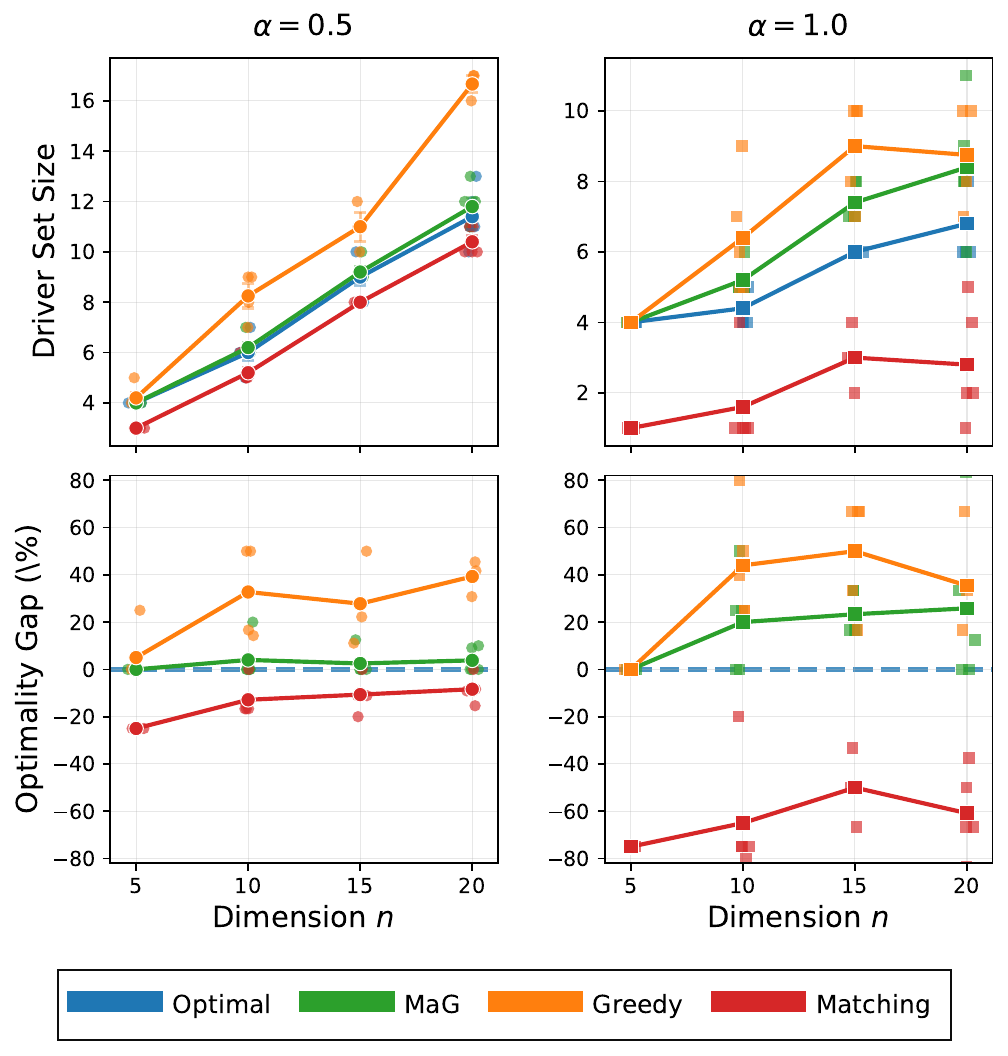}
    \caption{Driver node selection on small-scale hypergraphs. Comparison of four methods for selecting driver nodes in 4-uniform hypergraphs of varying size $n$ and density $\alpha$. Our proposed method (MaG), pure greedy selection, linear matching, and brute force search (optimal).}
    \label{fig: exp 1}
\end{figure}

\begin{figure}[t]
    \centering
    \includegraphics[width=\linewidth]{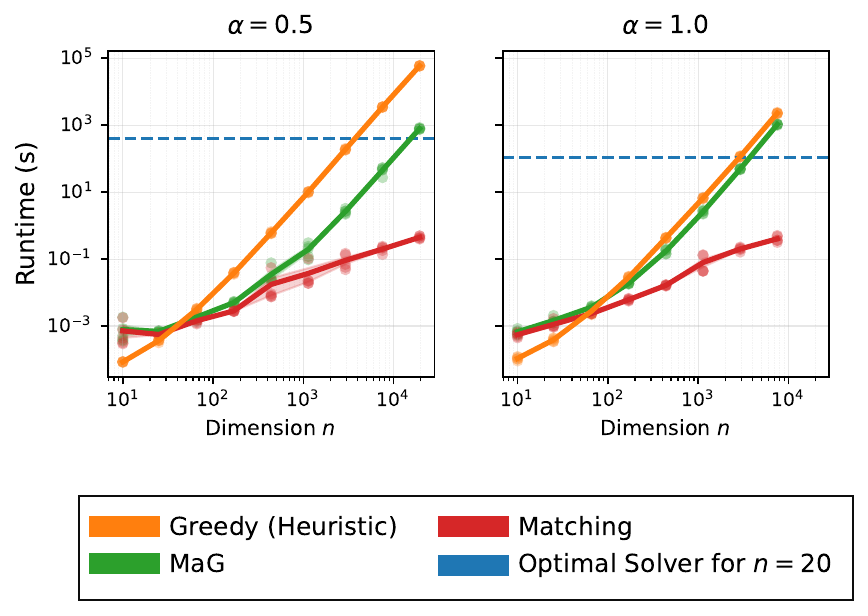}
    \caption{Scaling of MaG, matching, and greedy based control node selection for large-scale hypergraphs. Runtime is averaged over three trials per hypergraph size.}
    \label{fig: EXP 2}
\end{figure}

\begin{figure*}[t]
    \centering
    \includegraphics[width=\linewidth]{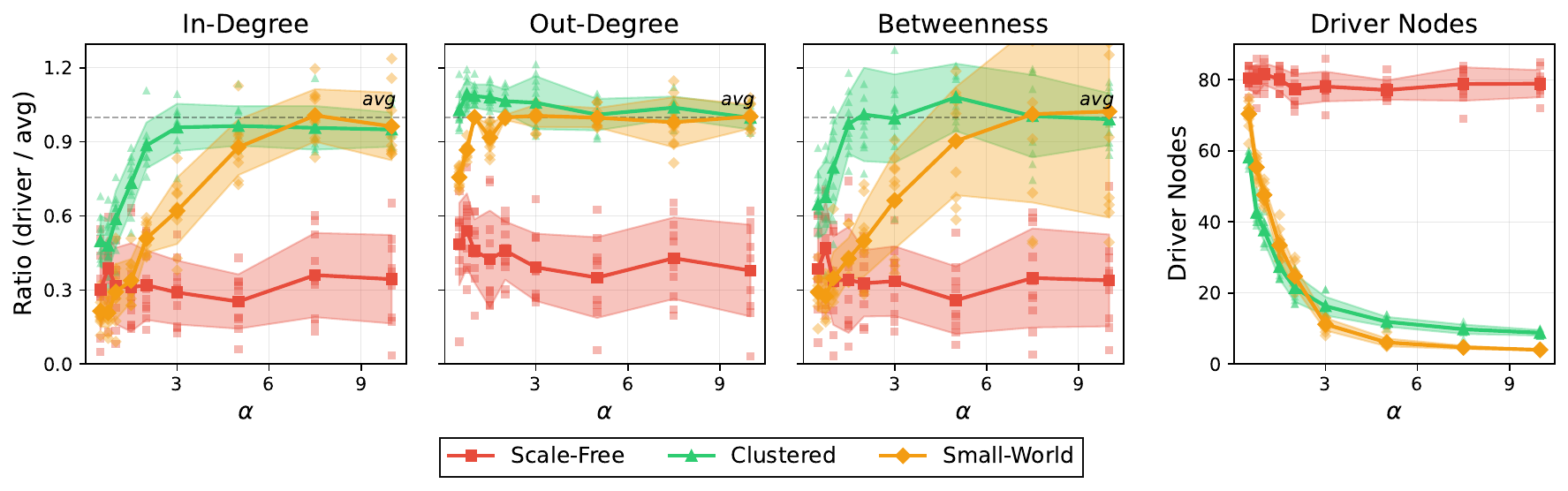}

    \caption{Structural properties of driver nodes in structured hypergraph topologies. Comparison of driver node characteristics across scale-free (red), clustered (green), and small-world (orange) hypergraph networks with $n=100$ varying densities. Solid lines show means over 10 independent realizations; shaded regions indicate standard deviations. Points represent individual trials.}
    \label{fig: exp 3}
\end{figure*}

\subsection{Large-Scale Hypergraphs}

Next, we evaluate driver node selection for large-scale hypergraphs. 
We consider state dimensions $n$ logarithmically spaced between 10 and 20,000, with $k=4$ and $\alpha \in \{0.5, 1.0\}$.  
Runtime and the number of driver nodes selected are reported in Fig. \ref{fig: EXP 2}.
For reference, the BFS runtime is reported for $n=20$ in the small-scale experiments. 
The results highlight the strong scalability of MaG. In particular, MaG exceeds the BFS runtime only when $n \gg 10{,}000$, corresponding to roughly a 500-fold improvement in scalability. Moreover, the runtime of MaG grows approximately linearly with $n$, indicating strong performance for large-scale hypergraphs. Greedy selection exhibits similar scaling behavior but remains substantially slower in practice; for example, when $\alpha = 0.5$ and $n = 20{,}000$, greedy selection is approximately two orders of magnitude slower than MaG. Matching-based selection achieves the fastest runtime overall, but it consistently fails to select a sufficient set of driver nodes to guarantee structural controllability. 
The memory requirements of MaG scale linearly with both $n$ and the number of hyperedges $m$, and are therefore omitted for brevity. Overall, these results demonstrate that MaG provides an efficient and scalable approach to driver node selection while guaranteeing structural controllability for large-scale hypergraph systems.

\subsection{Driver Node Placement in Structured Hypergraphs}

Many large-scale networked systems exhibit nontrivial structural organization, such as scale-free degree distributions, modular clustering, and small-world connectivity patterns. Because quantitative interaction strengths are often unavailable or unreliable in experimental settings, especially in biological and social systems, it is important to understand how network topology alone influences structural controllability and the resulting intervention requirements. In particular, topology-driven analysis can reveal where control inputs should be placed even when the exact dynamics or interaction weights are unknown. To investigate this relationship, we generate synthetic hypergraph networks with $n=100$ nodes across a range of densities $\alpha$, under three representative topologies. The first class consists of scale-free networks generated using preferential attachment, which capture hub dominated structures commonly observed in gene regulatory networks and protein protein interaction systems. The second class consists of clustered or modular networks with strong intra module connectivity and relatively sparse inter module connections, reflecting the modular organization often seen in biological pathways or functional subsystems. The third class consists of small-world networks that combine strong local clustering with a small number of long range connections, a structure frequently reported in neural and communication systems. For each topology and density level, we generate 10 independent realizations and apply MaG to determine the corresponding minimum set of driver nodes required to guarantee structural controllability.

The results shown in Fig. \ref{fig: exp 3} reveal clear topology dependent patterns in driver node placement. In scale-free networks, MaG tends to select nodes with lower in degree and betweenness centrality relative to the network average. In contrast, for clustered and small-world networks, the selected driver nodes typically exhibit near average or slightly above average centrality. These observations suggest that in hub dominated systems, controllability can often be achieved by actuating peripheral nodes rather than highly connected hubs. Such a strategy may be advantageous in practical interventions, since targeting peripheral nodes can reduce unintended effects on the broader network. We further examine how the required number of driver nodes varies with network density. In scale free networks, the number of drivers remains relatively stable as $\alpha$ increases, indicating that additional edges often reinforce the influence of existing hubs without substantially altering controllability requirements. In contrast, clustered and small world networks exhibit more pronounced reductions in the number of required drivers as density increases, suggesting that additional connectivity can significantly enhance accessibility across modules or neighborhoods. These trends are consistent across independent realizations and depend primarily on connectivity patterns rather than parameter values. This reinforces the usefulness of structural controllability analysis in biological and other data limited applications, where network topology can often be inferred even when interaction strengths are uncertain or unavailable.

\section{Discussion}\label{sec:5.5}

The structural controllability framework developed in this work establishes a topology-based characterization of controllability for homogeneous polynomial systems induced by hypergraphs and suggests several natural extensions to broader classes of polynomial systems and control objectives.

\subsection{Even-Degree Systems}
The analysis in this work focuses on homogeneous polynomial systems whose vector field has odd degree, which ensures that controllability can be characterized through Lie algebraic rank conditions under generic parameter choices.
In contrast, polynomial systems with even-degree vector fields are generally not strongly controllable due to symmetry properties that restrict reachable sets.
Nevertheless, a weaker property, strong accessibility, can still be characterized for even-degree systems using the structural framework developed here.
A nonlinear control system is strongly accessible from the origin if the reachable set from the origin has a nonempty interior \cite{hermann1977nonlinear, jurdjevic1985polynomial}.
While strong accessibility does not imply controllability, it guarantees that the system can reach an open set of states from the origin, and serves as a necessary condition for many feedback stabilization and motion planning objectives.
The structural criterion of Proposition 6 applies directly in this setting.
Specifically, if the directed hypergraph $\Hs(\At, \Bv)$ associated with an even-degree homogeneous polynomial system contains no inaccessible state nodes and no hyperedge dilations, then the system is structurally strongly accessible from the origin.
The MaG algorithm can therefore be applied without modification to identify a minimal set of driver nodes that guarantees strong accessibility for even-degree hypergraph dynamics.
This extends the practical utility of the proposed framework to a broader class of polynomial systems, including those arising from even-order  interactions.

\subsection{Non-Uniform Hypergraphs}
Many real-world networked systems exhibit interactions of heterogeneous cardinality, which are naturally modeled as non-uniform hypergraphs with non-homogeneous polynomial dynamics. The structural framework developed here applies to such systems from two complementary perspectives.
First, when the highest-degree term of the dynamics has odd degree, structural controllability of the full system can be inferred from the homogeneous subsystem induced by the subhypergraph containing only the highest-cardinality edges. It is a classical result that for polynomial control systems, controllability is generically determined by the highest-degree terms of the vector field, as lower-degree terms contribute to the Lie algebra without obstructing the rank condition already satisfied by the leading term~\cite{jurdjevic1985polynomial}. Consequently, if that homogeneous subsystem is structurally controllable, then the full non-homogeneous system is structurally controllable for almost all parameter choices. The MaG algorithm need only be applied to the subhypergraph of highest-cardinality edges to verify this condition.
Second, a non-uniform hypergraph can be embedded into the homogeneous framework directly by requiring each hyperedge to correspond to an odd-degree monomial term.
This is achieved by allowing nodes to appear with multiplicity within a hyperedge. Under this construction, the non-uniform hypergraph is represented by a single even-order tensor whose induced homogeneous polynomial has odd degree, and whose controllability is governed by Proposition~2. The appropriate perspective depends on the structure of the underlying physical system.

\subsection{Target Control}

In many applications, the objective of control is not to influence all system states but rather to regulate a subset of state variables \cite{moore1981principal}.
This setting corresponds to the problem of target controllability, where the goal is to steer only a designated set of state variables \cite{gao2014target}.
The hypergraph structural framework extends to this scenario.
Let $\Ts\subseteq\Vs$ denote the set of target nodes.
Structural target controllability requires that each node in $\Ts$ be reachable from the control inputs via a directed hypergraph walk.
Accessibility conditions can therefore be evaluated with respect to the target set rather than the full node set.
Dilation constraints must also be considered relative to the target nodes.
If a subset of target nodes forms a hyperedge dilation, then the available hyperedges provide insufficient independent influence to control those states.
Eliminating such dilations requires either additional driver nodes or modifications to the placement of control inputs. Accordingly, structural target controllability can be characterized by requiring that all target nodes are accessible and that no dilation exists within the hypergraph structure governing the target set.
From an algorithmic perspective, the driver node selection procedure proposed in this work can be adapted to this setting by restricting accessibility checks to the target nodes while maintaining dilation detection on the associated hypergraph.

\section{Conclusion}\label{sec:5}
Structural controllability has long served as a foundational tool in systems and control by enabling controllability analysis and actuator placement.
The original framework proposed by Lin provides a complete characterization for systems with pairwise interactions and catalyzed scalable methods for network control.
In this work, we extend the framework to higher-order systems with nonlinear and group interactions.
We develop a hypergraph-based criterion and algorithms to verify structural hypergraph controllability that allow for evaluating classical conditions of nonlinear control on large-scale networked systems. Specifically, we consider a class of nonlinear control systems with odd-degree homogeneous polynomial dynamics, which admit a tensor-based and hypergraph representations to describe nonlinear and group interactions.
By defining hypergraph structure through the support pattern of polynomial terms, we establish a directed hypergraph representation that encodes how multi-way interactions propagate control influence.
Our main theoretical result provides a necessary and sufficient structural characterization of strong controllability. A polynomial system is structurally controllable if and only if its associated directed hypergraph contains no inaccessible state nodes and no hyperedge dilations.
This result generalizes Lin’s accessibility and dilation conditions from graphs to higher-order interaction structures and offers a computationally tractable alternative to direct evaluation of the Lie algebraic rank conditions of polynomial control.
Moreover, we derive a matching-based lower bound on the minimum number of driver nodes required to achieve structural controllability and proposed scalable algorithms for driver-node selection.
By combining maximum matching on a star expansion with an accessibility completion procedure based on directed hypergraph walks, the resulting method yields driver sets that satisfy the structural controllability conditions while remaining computationally feasible for high dimensional systems.
Numerical experiments on synthetic hypergraphs demonstrated that the proposed approach scales efficiently to systems with tens of thousands of states and interactions and selects near-minimal driver sets.

The proposed framework admits extensions beyond controllability. An analogous hypergraph-based conditions can be developed for structural observability, where work leveraging directed hypergraphs as computational graphs has already begun \cite{montanari2022functional, pickard2025dynamic, pickard2025scalable, zhang2025data}. More broadly, the hypergraph representation introduced here provides a foundation for extending structural systems concepts to higher-order nonlinear networked systems. The proposed framework and associated algorithms enable the data-driven analysis and control of complex natural and engineered networks. By adopting a structural perspective, the framework mitigates limitations arising from parametric uncertainty and facilitates controllability analysis of systems with heterogeneous, nonlinear, and partially observed higher-order interactions. Such systems arise across biomedical, ecological, social, and communication domains, highlighting the potential for broad applicability of structural controllability analysis for large-scale higher-order networks.

\section*{Acknowledgments}
Joshua Pickard and Can Chen thank Prof. Indika Rajapakse, Prof. Anthony Bloch, and Dr. Amit Surana for introducing them to this area of study.

\section*{References}
\bibliographystyle{IEEEtran}
\bibliography{references}

\begin{IEEEbiography}[{\includegraphics[width=1in,height=1.25in,clip,keepaspectratio]{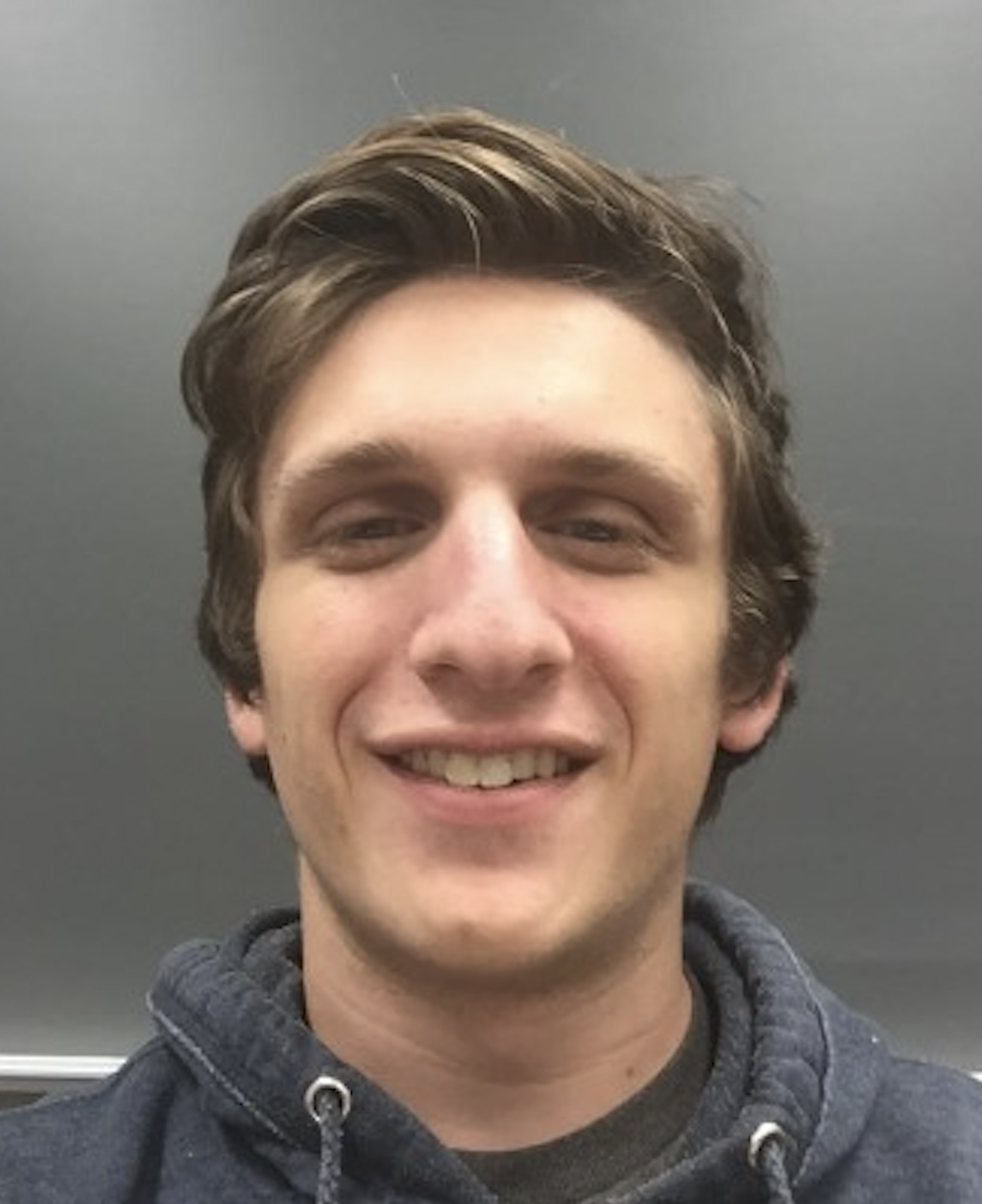}}]{Joshua Pickard} is a Eric and Wendy Schmidt Center postdoctoral fellow at the Broad Institute of MIT and Harvard, Cambridge, MA, USA. He received the B.S.E. degree in computer science in 2022 and the Ph.D. degree in bioinformatics in 2025, both from the University of Michigan, Ann Arbor, MI, USA, where his research focused on observability and higher-order methods for biomarker discovery. His research interests include data-driven approaches for analyzing the dynamics and control of large-scale biomedical systems.
\end{IEEEbiography}

\begin{IEEEbiography}[{\includegraphics[width=1in,height=1.25in,clip,keepaspectratio]{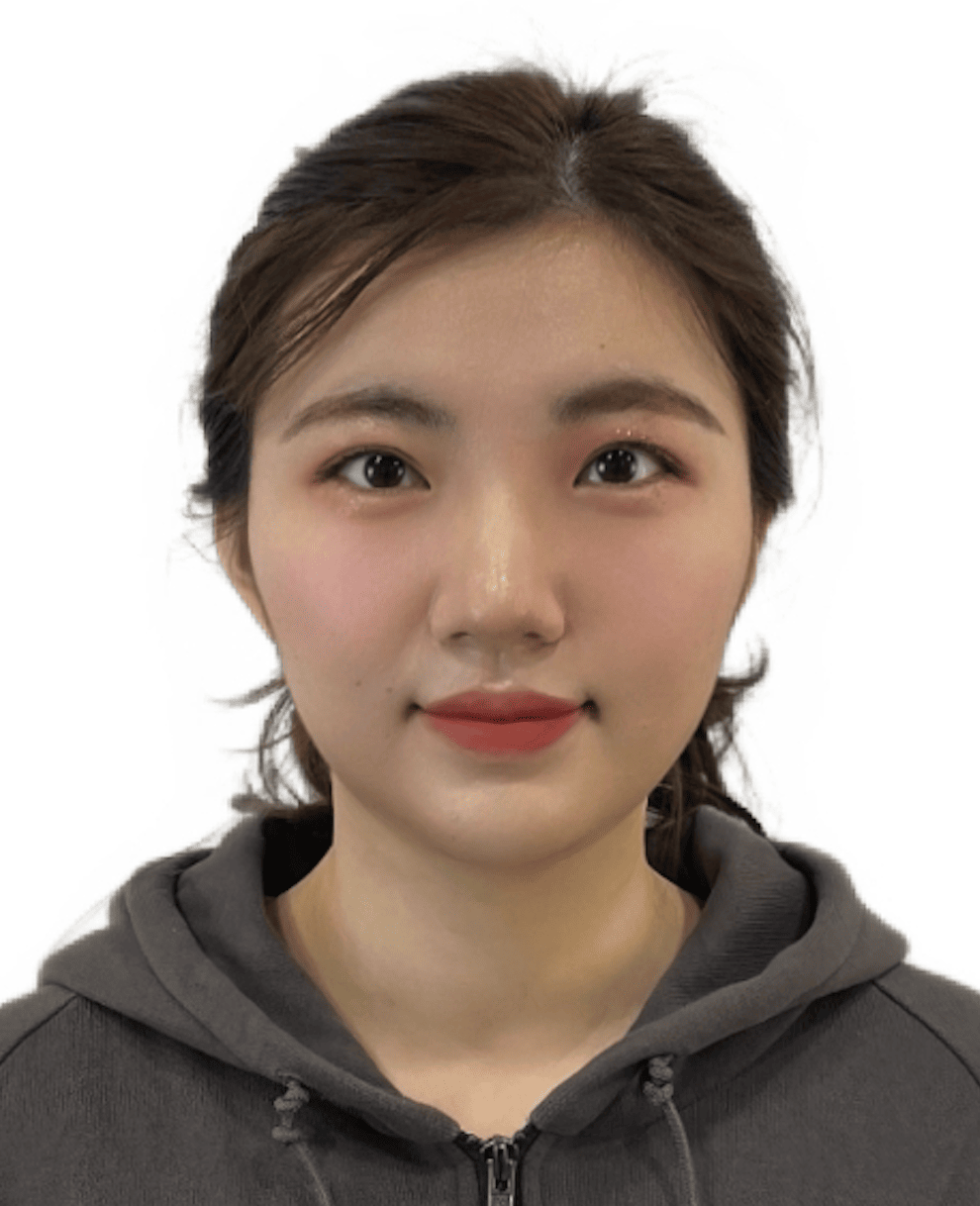}}]{Xin Mao}
received the B.Eng. degree in Automation Engineering from Zhejiang University, Hangzhou, Zhejiang, China, in 2017, and Ph.D. degree in
Electronic and Computer Engineering from Hong Kong University of Science and Technology, Hong Kong SAR, China, in 2022. She is currently a Postdoctoral Research Associate in the School of Data Science and Society at the University of North Carolina at Chapel Hill, Chapel Hill, NC, USA.
\end{IEEEbiography}

\begin{IEEEbiography}[{\includegraphics[width=1in,height=1.25in,clip,keepaspectratio]{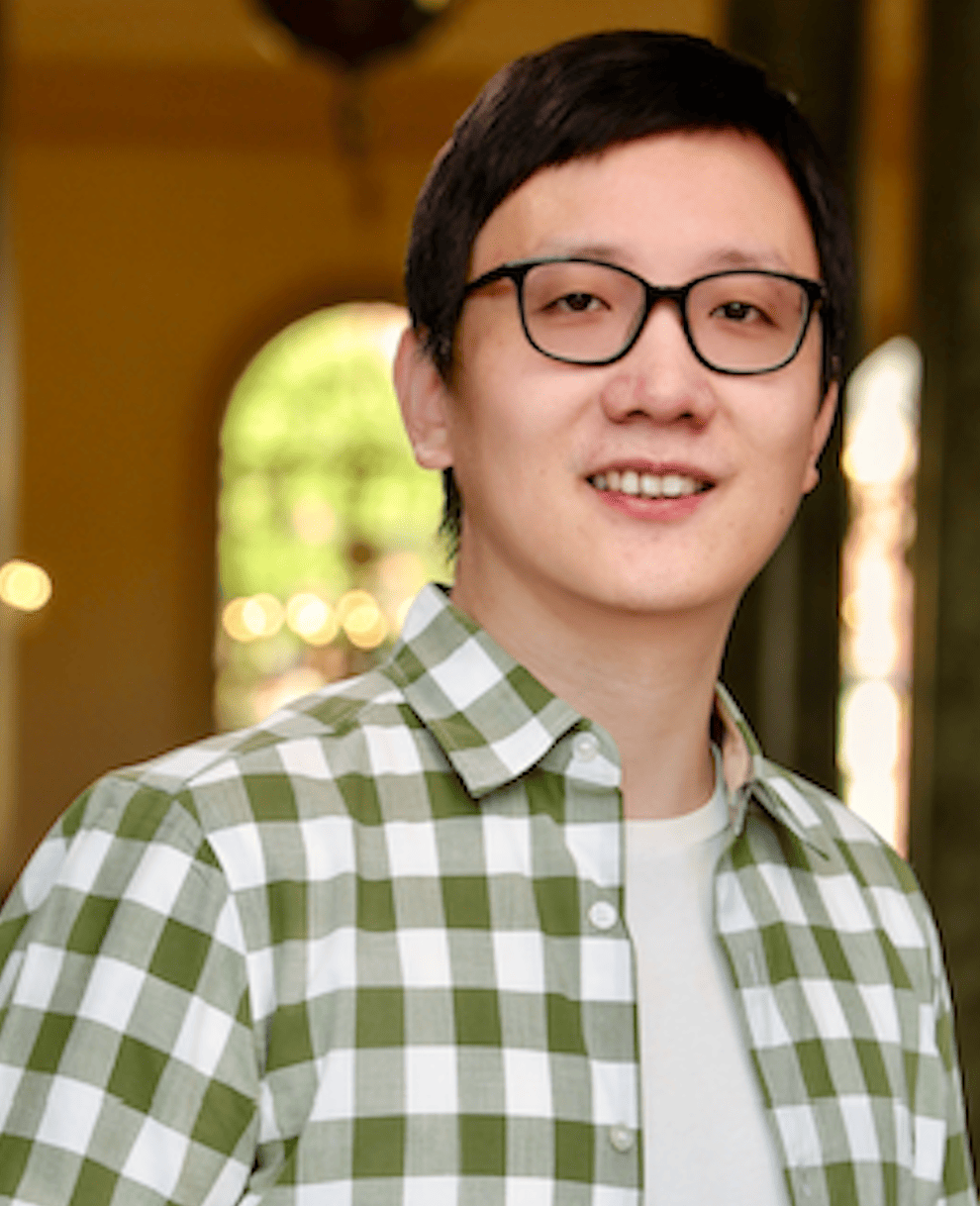}}]{Can Chen}
is currently an Assistant Professor in the School of Data Science and Society, with secondary appointments in the Department of Mathematics and the Department of Biostatistics, at the University of North Carolina
at Chapel Hill, Chapel Hill, NC, USA. He received the B.S. degree
in Mathematics from the University of California, Irvine, Irvine, CA, USA, in 2016, the M.S. degree in Electrical and Computer Engineering in 2020, and the Ph.D. degree in Applied and Interdisciplinary Mathematics in 2021, both from the University of Michigan, Ann Arbor, MI, USA. From 2021 to 2023, he was a Postdoctoral Research Fellow in the Channing Division of Network Medicine at Brigham and Women’s Hospital and Harvard Medical School, Boston, MA, USA. His research interests include control theory, network science, machine learning, and bioinformatics. 
\end{IEEEbiography}

\vfill

\end{document}